\theoremstyle{plain}
\newtheorem{thm}{Theorem}[section]
\newtheorem*{thm*}{Theorem}
\newtheorem{lem}[thm]{Lemma}
\newtheorem{defin}[thm]{Definition}
\newtheorem{prop}[thm]{Proposition}
\newtheorem{prop*}{Proposition}
\theoremstyle{definition}
\newtheorem{rem}[thm]{Remark}
\newtheorem{ques}[thm]{Question}
\newtheorem{cor}[thm]{Corollary}
\newcommand{\suchthat}{\text{ s.t. }} 
\newcommand{\ssepstatespace}{\Z}
\newcommand{\configspace}{X}
\def\blx@maxline{77}
\newcommand{\R}{\mathbb{R}} %
\newcommand{\Z}{\mathbb{Z}} %
\newcommand{\Prob}{\mathbb{P}} %
\newcommand{\e}{\epsilon}
\newcommand{\w}{\omega}
\newcommand{\W}{\Omega}
\newcommand{\Norm}[2]{\left\lVert{#1}\right\rVert_{#2}} %
\newcommand{\AND}{\textrm{ and }}
\newcommand{\almostsurely}{\textrm{a.s}}
\renewcommand{\limsup}{\varlimsup}
\newcommand{\E}{\mathbb{E}}
\newcommand{\specialpts}{\mathcal{S}} %
\newcommand{\biinfinite}{bi-infinite}
\newcommand{\Biinfinite}{Bi-infinite}
\newcommand{\directions}{\operatorname{\mathbf{A}}}
\newcommand{\borel}{\mathcal{B}}
\newcommand{\origin}{\mathbf{0}}
\newcommand{\OFP}{\Omega,\mathcal{F},\Prob}
\newcommand{\tmap}{T}
\newcommand{\tmaps}[1]{\{T^z\}_{z \in \Z^{#1}}}
\def\walk{X}
\newcommand{\walks}{\mathcal{W}}
\newcommand{\rect}{\operatorname{Rect}}
\DeclareMathOperator{\argmin}{argmin}
\newcommand{\Uniform}{\operatorname{Uniform}}
\newcommand{\Bernoulli}{\operatorname{Bernoulli}}
\newcommand{\Eqref}[1]{\eqref{#1}}
\newcommand{\Secref}[1]{Section~\ref{#1}}
\newcommand{\Thmref}[1]{Theorem~\ref{#1}}
\newcommand{\Corref}[1]{Corollary~\ref{#1}}
\newcommand{\Lemref}[1]{Lemma~\ref{#1}}
\newcommand{\Propref}[1]{Prop.~\ref{#1}}
\newcommand{\Defref}[1]{Definition~\ref{#1}}
\title{Stationary coalescing walks on the lattice II: entropy}
\author{Jon Chaika}
\address{Department of Mathematics\\ University of Utah\\ Salt Lake City, UT}
\email{chaika@math.utah.edu}
\author{Arjun Krishnan}
\address{Department of Mathematics, University of Rochester, Rochester, NY}
\email{arjun.krishnan@rochester.edu}
\date{}
\renewcommand{\specialpts}{\W_\alpha}
\begin{document}

\begin{abstract}
    This paper is a sequel to Chaika and Krishnan, 2016. We again consider translation invariant measures on families of nearest-neighbor semi-infinite walks on the integer lattice $\Z^d$. We assume that once walks meet, they coalesce. We consider various entropic properties of these systems. We show that in systems with completely positive entropy, bi-infinite trajectories must carry entropy. In the case of directed walks in dimension $2$ we show that positive entropy guarantees that all trajectories cannot be bi-infinite. To show that our theorems are proper, we construct a stationary discrete-time symmetric exclusion process whose particle trajectories form bi-infinite trajectories carrying entropy.
\end{abstract}
\maketitle

\tableofcontents

\section{Introduction}
\label{sec:intro}
Let $(\W,\mathcal{F},\Prob,\{T^z\}_{z \in \Z^d})$ be a $\Z^d$ measure-preserving dynamical system. Let $\walks(\w)$ be a stationary or translation-covariant subset ($\walks(T^z\w) = \walks(\w) - z$) of the lattice $\Z^d$, and suppose that it has at least one point with positive probability; i.e., $\Prob( \origin \in \walks(\w)) > 0$. Consider a family of measurable walks on the lattice $\{\walk_z\}_{z \in \walks}$, where each $\walk_z \colon \Omega \times \Z^+ \to \Z^d$ is a nearest-neighbor path that starts at $z$. We assume that almost surely for all $k \in \Z^+$ and $z \in \walks(\w)$, these walks have been created in a stationary way: 
\[
    \walk_z(\w, k) = x + \walk_{z-x}(\tmap^x \w, k),
\]
and that they are \emph{compatible}: 
\begin{align*}
    X_z(\w,k+1) = X_{X_z(\w,1)}(\w,k) .
\end{align*}
The compatibility condition implies that if two walks meet at a point at some time, then they remain together in the future; i.e., the two walks must coalesce and \emph{cannot cross each other}. 

Because walks coalesce when they meet, we may assume that there is a stationary vector-field $\alpha$ that is the discrete time-derivative of the walks:
\begin{equation}
    \alpha(\w,z) = \walk_z(\w,1) - \walk_z(\w,0).
    \label{eq:direction function definition}
\end{equation}
The $\alpha$ function takes values in $\directions \subset \{ \pm e_1, \ldots \pm e_d \}$, and we call a particular $\alpha$ value an \emph{arrow}. In this paper, we focus our efforts on \emph{directed} walks, where $\directions = \{e_1,\ldots,e_d\}$. One could think of the walks as the flow generated by the stationary vector field of arrows.  

A \biinfinite{} trajectory is a walk that is infinite in both forward and backward directions (see \Defref{def:biinfinite trajectory definition}). When all walks in $\walks$ coalesce with probability one, we say that we have almost-sure coalescence. In Theorem 2.5 and Theorem 2.9 of \citep{chaika_stationary_walks_2016}, we proved that in $d=2$, assuming that that walks do not form loops and cross every vertical (or horizontal or diagonal) line a finite number of times, there is a \emph{behavioral dichotomy}: Walks coalesce almost surely or there must exist a positive density of \biinfinite{} trajectories that \emph{do not coalesce with each other}. Directed paths (up-and-right paths) in $d=2$ automatically satisfy the no loops and line-crossing conditions. Moreover,
\begin{enumerate}
    \item Bi-infinite trajectories themselves form measure-preserving dynamical systems (with translation along the \biinfinite{} trajectory). Thus, all the walks have the same asymptotic direction (Corollary 2.14  of \citep{chaika_stationary_walks_2016}). 
    \item All trajectories that are not \biinfinite{} must coalesce with \biinfinite{} ones (Corollary 2.7 of \citep{chaika_stationary_walks_2016}). 
\end{enumerate}
In the same paper, we also provide a simple periodic system with \biinfinite{} trajectories and a system of independent identically distributed (iid) arrows with almost-sure coalescence, which seems to suggest that the more ``random'' a system is, the more likely we are to have almost-sure coalescence. Phrased as a question: 
\begin{quote}
    In $d=2$, is there a natural notion of randomness {(mixing, entropy, etc.)} that distinguishes between the almost-surely coalescent and the \biinfinite{} trajectories cases?
\end{quote}
Positive entropy is a useful dynamical measure of randomness. If we have directed walks in $d=2$, we show that positive entropy is enough to ensure that \emph{not all} trajectories can be \biinfinite{} (\Thmref{thm:bi-infinite points cannot have full measure with positive entropy}). This motivated us to investigate the general implications positive entropy on \biinfinite{} trajectories in this paper.  

Our abstract model of coalescing walks is motivated by questions about the behavior of infinite geodesics in first- and last-passage percolation. Let $\W = \{\w_z \in \R\}_{z \in \Z^d}$ with product $\sigma$-algebra and a translation invariant measure $\Prob$. The $\w_z$ are called \emph{weights} and they are typically nonnegative random variables. Let $\walk_{x,y}$ be a path from $x$ to $y$ and let the total weight of the path be the sum $W(\walk_{x,y}) := \sum_{z \in \walk_{x,y}} \w_z$. Define the first-passage time from $x$ to $y$ to be
\[
    T(x,y) = \inf_{\walk_{x,y}} W(\walk_{x,y}).
\]
If the weights are strictly positive, the first-passage time $T(x,y)$ defines a random metric on the lattice $\Z^d$. A geodesic for this random metric is a nearest-neighbor path that minimizes the passage time between every point that lies on it.

Several groups have created stationary compatible families of semi-infinite geodesics. The first to do so were \citet{MR1387641} under reasonable but unproven hypotheses on the time-constant of first-passage percolation. Importantly, they show that if the weights have the finite-energy property (see \citep[Hypothesis C, Prop. 9]{MR648202} or \citep[Definition 1.9]{MR3152744}), \biinfinite{} trajectories do not exist in these families. Other notable constructions include \citet{MR3152744} in first-passage percolation, \citet{MR3704768} in last-passage percolation, and more recently, by \citet{2016arXiv160902447A} under weaker assumptions than \citep{MR3152744} in first-passage percolation. In all of these cases, finite-energy and the Licea-Newman argument \citep{MR1387641} is used to show that \biinfinite{} trajectories do not exist in stationary compatible families of geodesics. In other words, finite-energy is an obstruction to the existence of \biinfinite{} geodesics. 

The finite-energy assumption is quite reasonable in the first- and last-passage percolation context since systems with iid weights do possess the finite-energy property. However, it does not generalize nicely to our more general setting of walks here. One appealing replacement candidate for finite-energy is completely positive entropy, since this property is inherited by all factors of the original system. In \Thmref{thm:special trajectories have positive entropy}, we show that in systems with completely positive entropy, \biinfinite{} trajectories must carry some part of the entropy of the system. We provide an example in \Secref{sec:symmetric simple exclusion} where the \biinfinite{} trajectories do indeed have positive entropy, showing that simply positive entropy by itself is not an obstruction to \biinfinite{} trajectories. The example is based on a discrete-time symmetric simple exclusion process (SSEP), and the particle trajectories form \biinfinite{} trajectories here. We show that these particle trajectories must have positive entropy.

\subsection{Acknowledgments}
A.\, Krishnan thanks Eric Cator for suggesting the exclusion example. J.\ Chaika was supported in part by NSF grants DMS-135500 and DMS-1452762, the Sloan foundation and a Warnock chair.
A.\ Krishnan was supported in part by a Simons collaboration grant 638966.

\section{Main results}
\label{sec:main results}
Let  $(\OFP)$ be a probability space and $\{T^g\}_{g \in G}$ be a measure-preserving group action of a countable group $G$ on $\W$. Since the $\sigma$-algebra does not feature prominently in any of our proofs, we will usually omit it from the notation. The corresponding dynamical system will be written as $(\W,\Prob,G)$, $(\W,\Prob,\{T^x\}_{x \in G})$ or as $(\W,\Prob,T)$ if the group is $\Z$.

\begin{defin}[Rectangular subsets of $\Z^d$]
    Let the rectangle centered at $x \in \Z^d$ with side lengths $(N_1,\ldots,N_d)$ be
    \[
        \rect_x(N_1,\ldots,N_d) = \prod_{i=1}^d [x_i - N_i, x_i + N_i] .
    \]
\end{defin}
If the side-lengths are equal, then we write $\rect_x(N)$. The boundary of any $R \subset \Z^d$ is written as $\partial R$ and consists of the set of points in $R$ that have at least one point in $\Z^d \setminus R$ as a nearest neighbor.

We call an arrow configuration \emph{non-trivial} if $\alpha$ is not constant almost surely. The canonical walk $\walk(\w)$ starts at the origin and $\alpha(\w)$ is its (discrete) derivative at time $0$. We will omit the $\w$ from the notation when it is clear from context. We frequently speak of \emph{configurations} on the lattice: for any $\w$, this refers to the collection of walks $\{\walk_z(\w)\}_{z \in \walks}$ or equivalently, the collection of arrows $\{ \alpha(T^z \w) \}_{z \in \walks}$.

\begin{defin}[Coalescence of points]
    Given a configuration, we say that the points $x$ and $y$ coalesce if the walks $\walk_x$ and $\walk_y$ coalesce in the future. That is, for some $k_0,k_1 \in \Z^+$, $X_x(\w,k_0) = X_y(\w,k_1)$. We say we have \emph{almost sure coalescence} if almost surely for all $x,y \in \Z^d$, the walks through $x$ and $y$ coalesce. %
\end{defin}

\begin{defin}[\Biinfinite{} walks and points]
    We say that a point $z \in \Z^d$ is \biinfinite{} if there is a sequence of points $\{a_n\}_{n=0}^\infty \in \Z^d$ such that for each $n$, $\walk_{a_n}(\w,i) = a_{n-i}$ for $i=1,\ldots,n-1$ and $\walk_{a_n}(\w,n) = z$. We call this union of (one-sided) walks $\cup_{n \in \Z^+} \cup_{i=0}^{\infty} \walk_{a_n}(\w,i)$ a \biinfinite{} trajectory. 
    \label{def:biinfinite trajectory definition}
\end{defin}

This paper investigates the extent to which randomness in the scenery influences, excludes, and does not exclude bi-infinite random walks. The most studied form of randomness in the subject, finite energy, has been shown to almost surely exclude bi-infinite random walks under various additional (and often strong) assumptions \citep{newman_surface_1995,MR1387641,MR648202,}. We focus instead on entropy, and in particular, \emph{completely positive entropy}, a property that is intrinsic to the measure preserving system (that is, invariant under isomorphism) and is even preserved by factors. As topologically mixing Markov chains have completely positive entropy, completely positive entropy does not imply finite energy.  On the other hand, C. Hoffman communicated to us that finite energy does not imply completely positive entropy, so there are no implications between these two settings. For all of the results in this paper, we restrict to directed paths in $d=2$. 

For any measure $\nu$ supported on a finite alphabet $A = \{a_1,\ldots,a_n\}$, the Shannon entropy is defined as $H(\nu) = \sum_{i=1}^n \nu(a_i) \log \nu(a_i)$. Let $\W := 
\directions^{\Z^d}$ be the space of arrow configurations. 
Let $M = \rect_0(N_1,\ldots,N_d)$ be a rectangle, let $\pi_M \colon \directions^{\Z^d} \to \directions^{M}$ be the coordinate projection map, and let $\Prob \circ \pi_M^{-1}$ be the pushforward measure.
The entropy-rate of $(\directions^{\Z^d},\Prob,\Z^d)$ is defined as usual by
\begin{equation}
    h(\Prob) = \lim_{N_1,\ldots,N_d \to \infty} |M|^{-1} H(\Prob\circ \pi_M^{-1}).
    \label{eq:entropy rate as a limit of entropy in rectangles}
\end{equation}
Note that the rate at which the $N_j \to \infty$ does not matter \citep[Theorem 15.12]{MR2807681}.

A factor of a dynamical system $(\Omega,\Prob,\tmaps{d})$ is another space $(Y,\Prob',\{S^z\}_{z \in \Z^d})$ such that there is a measurable map $\phi \colon \W \to Y$ with the following properties: $\Prob'$ is simply the pushforward measure $\Prob \circ \phi^{-1}$, and for all $z \in \Z^d$, $\phi(\tmap^z \w) = S^z \phi(\w)$ almost surely. If $Y = A^{\Z^d}$ where $A$ is a finite alphabet, then the factor is called a $\Z^d$ shift-system with finite alphabet.   

In light of \Thmref{thm:bi-infinite points cannot have full measure with positive entropy}, it is natural to consider the \emph{completely positive entropy} condition as a natural alternative to finite-energy. A system has completely positive entropy if all of its factors have positive entropy. In particular, when $(\W,\Prob,\Z^d)$ is a product space with product measure, then it and all of its factors have completely positive entropy \citep{MR0274717}. Completely positive entropy is also equivalent to the fact that every non-trivial partition of $\W$ has positive entropy. To explain this statement, we define the usual Kolmogorov-Sinai entropy. A partition $\mathcal{P} = \{P_1,\ldots,P_k\}$ breaks up $\Omega$ into a finite number of pairwise disjoint sets. Given two finite partitions $\mathcal{P}$ and $\mathcal{Q}$ of $\Omega$, define the join or common refinement of the partition as
\[
    \mathcal{P} \lor \mathcal{Q} = \{ A \cap B \colon A \in \mathcal{P}, B \in \mathcal{Q} \}.
\]
Indeed, partitions define factors in the following manner. Let $\mathcal{P}$ be a partition, and let $P(\w) \in \mathcal{P}$ be the partition element that $\w$ belongs to, and assign to each point $\w$ its partition ``address'': let $\phi(\w) = \{P(\tmap^z \w)\}_{z \in \Z^d}$. Then, $(\mathcal{P}^{\Z^d},\Prob \circ \phi^{-1})$ is a factor under the $\Z^d$ shift. 
Let the entropy of the partition $\mathcal{P}$ be 
\[
    H(\Prob , \mathcal{P})=\sum_{A\in \mathcal{P}} -\Prob(A)\log(\Prob(A)). 
\]
The entropy rate of the partition is defined as the entropy rate of the shift-system factor $\phi \colon \W \to \mathcal{P}^{\Z^d}$, and is denoted $h(\Prob,\mathcal{P})$ Thus, we also have
\begin{equation}
    h(\Prob,\mathcal{P}) = \lim_{L \to \infty} \frac{1}{|\rect(L)|} H\left(\Prob,\bigvee_{z \in \rect(L)}T^z \mathcal{P} \right).
    \label{eq:entropy rate in terms of join}
\end{equation}

The entropy rate $h(\Prob)$ is defined as the supremum over partitions:
\begin{equation}
    h(\Prob) = \sup_{\mathcal{P}} h(\Prob , \mathcal{P}).
    \label{eq:entropy definition in terms of partitions}
\end{equation}
This is the same entropy rate that appears in \eqref{eq:entropy rate as a limit of entropy in rectangles}. When it is more convenient to speak in terms of partitions and there is no ambiguity about the measure, we will write $h(\mathcal{P})$ and $H(\mathcal{P})$. A generating partition is one that generates the $\sigma$-algebra, and it is a standard fact that it also achieves the supremum in \eqref{eq:entropy definition in terms of partitions}. 

Under the assumption of completely positive entropy, we could not show that the \biinfinite{} trajectories cannot occur, but instead we show that they must carry some of the entropy of the system. We explain what we mean by this last statement next. The arrows induce a map $T_{\alpha}$ along walks defined by
\begin{equation}
    T_{\alpha}\w = T^{\alpha(\w)}\w  .
\label{eq:translation map along walks}
\end{equation}
The $T_{\alpha}$ map is neither measure preserving nor invertible in general. Along \biinfinite{} trajectories, however, it is both invertible and measure preserving. 
Let $\specialpts$ be the event that the origin is in a \biinfinite{} trajectory. For any measurable $A$, define $\Prob_{\alpha}(A) = \Prob(A  \cap \specialpts)$ and obtain the measure space $(\W_{\alpha},\Prob_{\alpha})$ in the usual way. Theorem 2.13 and Corollary 2.14 of \citep{chaika_stationary_walks_2016} state that $(\W_{\alpha},\Prob_{\alpha},T_{\alpha})$ form a measure-preserving $\Z$ system.

In the following theorem, we assume without loss of generality that $\W = \directions^{\Z^d}$ since we may always descend to this factor. 
\begin{thm}
    In $d \geq 2$, let the paths be directed $(\directions = \{e_1,\ldots,e_d\})$ and let $(\directions^{\Z^d},\Prob,\{T^z\}_{z\in \mathbb{Z}_d})$ have completely positive entropy. Let $(\specialpts,\Prob_{\alpha},T_{\alpha})$ be the $\Z$-system on \biinfinite{} trajectories, and suppose $\Prob(\specialpts) > 0$. Then, the entropy-rate of the \biinfinite{} trajectory system is positive almost surely; i.e., 
    \[
        h(\theta_{\w}) > 0 \quad \almostsurely~\w,
    \]
    for each $\theta_{\w}$ an ergodic component of $\Prob_{\alpha}$. 
    \label{thm:special trajectories have positive entropy}
\end{thm}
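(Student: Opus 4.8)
\medskip
\noindent\textit{Proof strategy.}
The plan is to argue by contradiction, exploiting the fact that completely positive entropy forbids non‑trivial zero‑entropy factors (in particular it forces the $\Z^d$‑system to be ergodic, since a non‑trivial invariant partition has zero entropy rate). We may assume $\alpha$ is not $\Prob$‑a.s.\ constant, as otherwise $\Prob$ is a point mass and the statement is degenerate. The key point is that, although $(\specialpts,\Prob_\alpha,T_\alpha)$ is not itself a factor of $(\directions^{\Z^d},\Prob,\{T^z\})$ — the set $\specialpts$ is not $\Z^d$‑invariant — the information \emph{``which arrows occur along the \biinfinite{} trajectories''} can be repackaged as an honest $\Z^d$‑factor with a finite alphabet, and a one‑dimensional entropy bound for the trajectory process then forces that factor to have zero entropy rate.

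First I would set up this factor. Let $\mathcal Q$ be the partition of $\specialpts$ by the value $\alpha(\w)\in\directions$, so that the $T_\alpha$‑process generated by $\mathcal Q$ reads off the arrows along the trajectory through the origin. Let $E\subseteq\specialpts$ be the union of all ergodic components of $(\specialpts,\Prob_\alpha,T_\alpha)$ with zero entropy rate; $E$ is measurable and $T_\alpha$‑invariant, hence a union of full \biinfinite{}‑trajectory orbits, and $h(\Prob_\alpha|_E,T_\alpha,\mathcal Q)=0$ because the entropy rate of a $\Z$‑system is the integral of the entropy rates of its ergodic components. Assume for contradiction $\Prob_\alpha(E)>0$, and define
\[
 \phi_E\colon \directions^{\Z^d}\to(\directions\cup\{*\})^{\Z^d},\qquad
 \phi_E(\w)(z)=
 \begin{cases}\w(z), & T^z\w\in E,\\ *, & \text{otherwise.}\end{cases}
\]
Because $E$ is $T_\alpha$‑invariant, ``$T^z\w\in E$'' says precisely that $z$ lies on a \biinfinite{} trajectory all of whose shifts lie in $E$, and one checks $\phi_E\circ T^y=S^y\circ\phi_E$, so $\phi_E$ is a factor onto a $\Z^d$ shift with finite alphabet.

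The heart of the argument is the estimate $h((\phi_E)_*\Prob)=0$. Fix a box $\rect_0(L)$. Since the paths are directed, every \biinfinite{} trajectory meets $\rect_0(L)$ in a single monotone segment whose first vertex lies on one of the $d$ ``lower'' faces $\{z_i=-L\}$, and those faces contain only $O(L^{d-1})$ lattice points. One then checks that $\phi_E(\w)|_{\rect_0(L)}$ is a deterministic function of the following data: for each lower‑face point $q$, the bit $\mathbf 1[T^q\w\in E]$ together with, when that bit is $1$, the forward arrow sequence of the walk from $q$ run until it exits $\rect_0(L)$ — a string of length at most $2dL$ (using $T_\alpha^k(T^q\w)=T^{X_q(\w,k)}\w$ to identify this string with a block of the $T_\alpha$‑process). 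By subadditivity of Shannon entropy, translation invariance (moving $q$ to the origin), and $h(\Prob_\alpha|_E,T_\alpha,\mathcal Q)=0$, each of the $O(L^{d-1})$ terms is at most $\log 2 + H_{\Prob_\alpha|_E}\!\big(\bigvee_{k=0}^{2dL}T_\alpha^{-k}\mathcal Q\big)=\log 2+o(L)$, so $H(\phi_E(\w)|_{\rect_0(L)})=o(L^{d})$ and hence $h((\phi_E)_*\Prob)=0$. Finally $(\phi_E)_*\Prob$ is non‑trivial: if $0<\Prob(E)<1$ then $\phi_E(\w)(\origin)=*$ with probability $1-\Prob(E)\in(0,1)$; if $\Prob(E)=1$ then $\specialpts$ has full measure, $\phi_E$ is essentially the identity, and this is non‑trivial since $\alpha$ is non‑constant. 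A non‑trivial zero‑entropy factor contradicts completely positive entropy, so $\Prob_\alpha(E)=0$, which is the claim.

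I expect the main obstacle to be exactly the coding step above — more precisely, realizing that one should not try to compare $h(\Prob)$ directly with the trajectory entropy (the ``debris'' of sites not on \biinfinite{} trajectories, and the possibly positive‑entropy trajectories, would contribute uncontrolled terms), but should instead pass to the factor $\phi_E$, which \emph{discards} all of that and retains only the zero‑entropy trajectories; the remaining work is then the routine bookkeeping that $O(L^{d-1})$ one‑dimensional segments, each of entropy $o(L)$, cannot carry the $\sim L^d$ entropy of a $d$‑dimensional box. A secondary subtlety is that $(\specialpts,\Prob_\alpha,T_\alpha)$ need not be ergodic, which is precisely why the construction must be run with the union $E$ of the zero‑entropy components rather than with all of $\specialpts$; this is where the ``integral over ergodic components'' fact is used.
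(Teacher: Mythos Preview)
Your proposal is correct and follows the same overall architecture as the paper's proof: assume the zero-entropy component set $E=\{\w:h(\theta_\w)=0\}$ has positive measure, build from it a $\Z^d$-factor, show that factor has zero entropy rate, and contradict completely positive entropy. The counting mechanism is also the same — directedness forces every \biinfinite{} trajectory meeting a box of side $L$ to enter through one of $O(L^{d-1})$ ``lower'' boundary points, and each such trajectory contributes only $o(L)$ entropy because it lives in a zero-entropy $\Z$-system.

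Where you differ is in two implementation choices, both of which streamline the argument. First, the factor map: the paper's recoding lemma sends each lattice point to the $\ell^\infty$-nearest point of $\mathcal C$ (with a lexicographic tie-break) and copies that arrow, keeping the alphabet $\directions$; you instead enlarge the alphabet to $\directions\cup\{*\}$ and simply mark non-$E$ sites with $*$. Your map is more transparent and makes the ``configuration in $R$ is determined by boundary trajectory data'' step immediate, whereas the paper needs an auxiliary box $R'\supset R$ to guarantee that the nearest $\mathcal C$-point to any $z\in R$ lies in $R'$. Second, the entropy estimate: the paper invokes the generalized Shannon--McMillan theorem, obtains a good set $\mathcal Y$ on which the configuration count is small, and then runs a Markov-inequality argument to control the contribution of the bad set $\mathcal C\setminus\mathcal Y$ on $\partial R'$. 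You bypass all of this by working directly with Shannon entropy: $h(\Prob_\alpha|_E,T_\alpha,\mathcal Q)=0$ gives $H_{\Prob_\alpha|_E}\big(\bigvee_{k=0}^{2dL}T_\alpha^{-k}\mathcal Q\big)=o(L)$ straight from the definition of entropy rate, and subadditivity over the $O(L^{d-1})$ boundary points finishes. The paper's route buys nothing extra here; your version is shorter and avoids the good/bad bookkeeping.
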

For the completeness of our treatment, the next theorem shows that the qualitative result proven in the previous theorem is insufficient to rule out bi-infinite random walks:
\begin{thm} There exists a system $(\directions^{\Z^2},\Prob,\{T^z\}_{z\in \mathbb{Z}_2})$ that almost surely has bi-infinite random walks and so that \biinfinite{} trajectory system has positive entropy. 
\label{thm:positive entropy trajectories}
\end{thm}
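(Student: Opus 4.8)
The plan is to realize the example from a discrete-time symmetric simple exclusion process (SSEP) run in a Bernoulli($\rho$)-product equilibrium, $\rho\in(0,1)$, and to encode the resulting space-time stirring diagram as a directed-walk configuration on $\Z^2$. One fixes a brick-wall parallel update: at each time a fixed family of pairwise disjoint edges is designated, and the contents of each designated edge are independently exchanged with probability $p\in(0,1)$, keeping the trajectory labels fixed on the configurations $(1,1)$ and $(0,0)$ so that particle trajectories never cross. Each such step preserves (indeed is reversible for) every Bernoulli product measure, so running the chain in equilibrium produces a process $(\eta_t)_{t\in\Z}$ of occupation configurations invariant under translations of both space and time. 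After a suitable identification of the space-time diagram with $\Z^2$ turning the two possible one-step moves of a particle into the two coordinate directions, this becomes a translation-invariant measure $\Prob$ on $\directions^{\Z^2}$ whose walks $\walks(\w)$ are exactly the particle trajectories (arrows at unoccupied sites are assigned by a fixed translation-covariant local rule and play no role below). Since distinct particles always occupy distinct sites, no two of these walks ever meet, so the system realizes the non-coalescing alternative of the dichotomy of \citep{chaika_stationary_walks_2016}.

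Every particle of the equilibrium SSEP has a trajectory defined for all $t\in\Z$, so every occupied site lies on a walk that is infinite in both directions in the sense of \Defref{def:biinfinite trajectory definition} — take $a_n$ to be the position of that particle $n$ steps in the past. Since the occupation density is $\rho$, translation invariance gives $\Prob(\specialpts)\ge\Prob(\origin\text{ occupied})=\rho>0$, so the system almost surely contains bi-infinite walks. With the convention fixed for the ghost arrows one checks that the bi-infinite trajectories are precisely the particle trajectories, so that $(\specialpts,\Prob_\alpha,T_\alpha)$ is, up to a null set, the environment-seen-from-a-tagged-particle system of the SSEP and $\Prob_\alpha$ is the Palm measure $\Prob(\,\cdot\mid\origin\text{ occupied})$.

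For the entropy bound, let $\mathcal{P}$ be the two-element partition of $\specialpts$ recording the arrow at the origin — equivalently, whether the tagged particle moves or stays at the current step — so that $h(\Prob_\alpha)\ge h(\Prob_\alpha,\mathcal{P})=H_{\Prob_\alpha}(\mathcal{P}\mid\mathcal{P}^-)$, where $\mathcal{P}^-$ is the $\sigma$-algebra generated by the arrows read off going backwards along the trajectory. Enlarge the conditioning to the $\sigma$-algebra $\mathcal{G}$ generated by the whole configuration at times $\le 0$ together with the stirring coins at times $<0$; this contains $\mathcal{P}^-$. Conditionally on $\mathcal{G}$, the tagged particle's position, its designated neighbor, and whether that neighbor is occupied are all determined, and the only remaining randomness in the current arrow is the coin at the current designated edge, which is Bernoulli($p$) and independent of $\mathcal{G}$ by the driving-noise structure of the chain; hence the pointwise conditional entropy of $\mathcal{P}$ given $\mathcal{G}$ equals $\mathbf{1}[\text{designated neighbor empty}]\cdot H(p)$, with $H(p)=-p\log p-(1-p)\log(1-p)$. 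Because the equilibrium is a Bernoulli product, under $\Prob_\alpha$ the designated neighbor of the tagged particle is empty with probability $1-\rho$; since conditioning on a larger $\sigma$-algebra cannot increase conditional entropy, taking expectations gives
\[
 h(\Prob_\alpha)\;\ge\;h(\Prob_\alpha,\mathcal{P})\;=\;H_{\Prob_\alpha}(\mathcal{P}\mid\mathcal{P}^-)\;\ge\;H_{\Prob_\alpha}(\mathcal{P}\mid\mathcal{G})\;=\;(1-\rho)\,H(p)\;>\;0 .
\]
Finally, the environment seen from a tagged particle in an equilibrium exclusion process is ergodic under $T_\alpha$ (this follows from the mixing of the i.i.d.\ stirring coins, as in the tagged-particle limit theorems), so $\Prob_\alpha$ has a single ergodic component and $h(\theta_\w)=h(\Prob_\alpha)>0$ almost surely. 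Alternatively, one can run the same conditional-entropy estimate directly on each ergodic component, using that on almost every component the tagged particle moves at the current step with positive probability.

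The main obstacle is the entropy bound of the third paragraph: one must set up the discrete exclusion so that each step is genuinely driven by fresh, conditionally Bernoulli randomness that is not destroyed by passing to the arrow factor $\directions^{\Z^2}$ and that survives the conditioning on $\specialpts$; the brick-wall construction is convenient precisely because ``the origin is occupied'' is measurable with respect to the past and is independent of the current coin. The remaining points — the explicit encoding of the space-time diagram as a directed-walk configuration, the identification of bi-infinite trajectories with particle trajectories (which pins down the treatment of unoccupied sites), and the ergodicity of $\Prob_\alpha$ — are routine but require care.
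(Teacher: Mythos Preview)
Your proof is correct and takes a genuinely different route from the paper's. The paper also builds the example from a discrete-time SSEP in Bernoulli equilibrium (though with all edges firing simultaneously and an auxiliary tie-breaking variable rather than your brick-wall update), but the entropy arguments diverge completely. The paper first shows that the ambient $\Z^2$ occupation process has positive entropy rate by bounding $H(\mathcal{B}_{m,n+1}\mid\mathcal{B}_{m,n})\ge cm$ via a ``free site'' combinatorial estimate, and then argues by contradiction: if $(\specialpts,\Prob_\alpha,T_\alpha)$ had zero entropy, Shannon--McMillan would make all but an $\epsilon$-fraction of length-$m$ particle paths ``predictable'', and a box-configuration count in the style of the proof of \Thmref{thm:special trajectories have positive entropy} would force the ambient $\Z^2$ entropy to vanish. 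Your argument is direct and more elementary: the single chain $h(\Prob_\alpha)\ge H_{\Prob_\alpha}(\mathcal{P}\mid\mathcal{P}^-)\ge H_{\Prob_\alpha}(\mathcal{P}\mid\mathcal{G})=(1-\rho)H(p)$ uses only that the time-$0$ stirring coin is fresh randomness independent of the past $\sigma$-algebra $\mathcal{G}$ and that the designated neighbour is empty with probability $1-\rho$ under the product equilibrium conditioned on $\{\origin\text{ occupied}\}$. This yields an explicit numerical lower bound and bypasses both the ambient-entropy lemma and the counting machinery, at the price of being tailored to a construction in which each step is visibly driven by fresh i.i.d.\ coins; the paper's indirect route, by contrast, exhibits how the trajectory entropy is forced by the ambient $\Z^2$ entropy. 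Two small points: the ergodicity of $\Prob_\alpha$ under $T_\alpha$ is not needed, since the paper itself establishes only $h(\Prob_\alpha)>0$; and for the ghost-arrow issue you can either take $\walks$ to be the occupied sites (the framework allows $\walks\subsetneq\Z^2$) or observe that by particle--hole symmetry of SSEP the hole trajectories satisfy the same bound, so the convention is immaterial.
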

The system $(\directions^{\Z^2},\Prob,\{T^z\}_{z\in \mathbb{Z}_2})$ is a factor of a discrete-time simple exclusion process we describe in \Secref{sec:symmetric simple exclusion}. 

\section{Proofs}
    
\subsection{Positive entropy}
As a warm up to proving \Thmref{thm:special trajectories have positive entropy} we first show that in any system with positive entropy that has directed walks from every point, not all trajectories can be \biinfinite{}.
\begin{thm}
    In $d=2$, suppose we have directed walks $(\directions = \{e_1,e_2\})$ defined on the $\Z^2$ shift space $(\directions^{\Z^2},\Prob,\Z^2)$. Then, if the entropy-rate $h(\Prob)$ is positive, all trajectories cannot be \biinfinite{}.
   \label{thm:bi-infinite points cannot have full measure with positive entropy}
\end{thm}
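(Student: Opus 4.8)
The plan is to show that ``all trajectories are \biinfinite{}'' forces the arrow configuration to be so rigid that its entropy rate vanishes, contradicting $h(\Prob)>0$. Negating the conclusion means $\Prob(\specialpts)=1$, so by translation invariance almost every $\w$ has the property that \emph{every} point of $\Z^2$ lies on a \biinfinite{} trajectory; fix such an $\w$. I would slice $\Z^2$ along the anti-diagonals $L_n=\{(i,j):i+j=n\}$, each identified with $\Z$ by $(i,n-i)\leftrightarrow i$. Because the walks are directed, a step adds $e_1$ or $e_2$ and hence increases $i+j$ by exactly one, so each walk meets each $L_n$ exactly once; the arrows on $L_n$ therefore define a ``transfer map'' $f_n\colon\Z\to\Z$ by $f_n(i)=i+1$ if $\alpha$ equals $e_1$ at the $i$-th point of $L_n$ and $f_n(i)=i$ otherwise. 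Thus $f_n$ is nondecreasing with $f_n(i)\in\{i,i+1\}$, and the entire configuration is nothing but the sequence $(f_n)_{n\in\Z}$.

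The key step is the structural claim: \emph{if every point is \biinfinite{}, then for each $n$ the arrows along $L_n$ form one of the monotone patterns} --- all $e_1$, all $e_2$, or $e_1$ on $(-\infty,c_n]$ and $e_2$ on $(c_n,\infty)$ for some $c_n\in\Z$. To prove it, note that if $j\in L_{n+1}$ is \biinfinite{} then, by \Defref{def:biinfinite trajectory definition}, its bi-infinite trajectory contains a predecessor $q\in L_n$, and following the arrow at $q$ gives $f_n(q)=j$; hence every $f_n$ is surjective. Surjectivity together with $f_n(i)\in\{i,i+1\}$ and monotonicity is very rigid: a site where $f_n$ switches from ``$i\mapsto i+1$'' to ``$i\mapsto i$'' forces $f_n(i)=i+1$ for all smaller $i$ and $f_n(i)=i$ for all larger $i$ (otherwise some integer is omitted from the image), while a switch in the opposite order omits an integer outright. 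Hence $f_n$ is the unit shift, the identity, or has a single such switch --- precisely the three patterns claimed (the first two being the $c_n=\pm\infty$ degenerations).

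Granting the claim, the entropy estimate is immediate. Write $M=\rect_0(N)$. Only the anti-diagonals $L_n$ with $|n|\le 2N$ meet $M$ (at most $4N+1$ of them), and each $L_n\cap M$ is a run of at most $2N+1$ consecutive points; by the claim the arrows on $L_n\cap M$ are then determined by the position of $c_n$ relative to that run, so they take at most $2N+2$ values. Hence the restriction of the configuration to $M$ takes at most $(2N+2)^{4N+1}$ values, so $H(\Prob\circ\pi_M^{-1})\le (4N+1)\log(2N+2)$ and
\[
    h(\Prob)=\lim_{N\to\infty}\frac{H(\Prob\circ\pi_M^{-1})}{(2N+1)^2}\le\lim_{N\to\infty}\frac{(4N+1)\log(2N+2)}{(2N+1)^2}=0,
\]
contradicting $h(\Prob)>0$.

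I expect the main obstacle to be the structural claim --- more precisely, resisting the temptation to attack the problem with a Burton--Keane-type density argument for \biinfinite{} trajectories, and instead using the much cleaner observation that directedness collapses the configuration to the family $(f_n)$ and that ``every point \biinfinite{}'' is exactly surjectivity of every $f_n$. A secondary point requiring care is the precise content of \Defref{def:biinfinite trajectory definition}: one must use that a \biinfinite{} point carries an actual bi-infinite path (so that a \biinfinite{} point of $L_{n+1}$ genuinely has a predecessor on $L_n$), not merely arbitrarily long finite backward pieces. Both hypotheses are used essentially: directedness to obtain the monotone one-dimensional transfer maps, and $d=2$ so that the slices $L_n$ are copies of $\Z$ and the rectangle $M$ meets only $O(N)$ of them.
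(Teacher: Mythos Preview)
Your proof is correct and follows the same overall scheme as the paper's: slice along anti-diagonals, show that ``every point bi-infinite'' forces rigidity on each slice, and count configurations in a box to conclude $h(\Prob)=0$. The execution of the rigidity step differs, and yours is the more self-contained one. The paper combines the existence of predecessors with Theorem~2.9 of the companion paper (distinct bi-infinite trajectories cannot coalesce) to conclude that all arrows on an anti-diagonal segment are \emph{equal}; a single trajectory of length $L$ then determines an entire $L\times L$ rotated box, giving at most $2^L$ configurations, and \Corref{cor:shannon macmillan for ergodic measures} is invoked for the contradiction. You instead observe that ``every point bi-infinite'' is exactly surjectivity of each transfer map $f_n$, and that a surjective nondecreasing map with $f_n(i)\in\{i,i+1\}$ can have at most one switch, from $i\mapsto i+1$ to $i\mapsto i$; this yields at most $(2N+2)^{4N+1}$ configurations in $\rect_0(N)$, and $h(\Prob)=0$ drops out of the definition of entropy rate. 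The paper's route buys a sharper count --- your third pattern in fact forces a coalescence at $c_n$, which their no-coalescence result excludes --- at the price of an external citation; your route stays internal to this paper and avoids both that citation and Shannon--McMillan, neither of which is genuinely needed for this theorem.
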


In this section, let $\W = \directions^{\Z^d}$ be the space of arrow configurations and let $\borel$ be the product Borel $\sigma$-algebra on it. If a finite-alphabet $\Z^d$ system has positive entropy, the Shannon-MacMillan theorem applies. We first state a corollary of the general Shannon-MacMillan theorem that we state later (see \Thmref{thm:shannon macmillan non ergodic case}). 

\begin{cor}[Shannon-McMillan for ergodic measures]
    Let $(\directions^{\Z^d},\Prob,\Z^d)$ be a measure-preserving ergodic $\Z^d$ system with entropy-rate $h$. For any $\e > 0$, there is a large $L$ such that whenever $R \subset \Z^2$ is a rectangle of minimal side-length $L$, $\exists \mathcal{Y} \subset \directions^R$ such that  $\Prob(\mathcal{Y}) > 1 - \e$, and for every $a \in \mathcal{Y}$,
            \[
                e^{-(h+\e)L^d} < \Prob( a ) < e^{-(h-\e)L^d}.
            \]
    Here, $\Prob(a) = \Prob(\pi_R^{-1}(a))$ is the pushforward measure under the coordinate projection map $\pi_R \colon \directions^{\Z^d} \to \directions^R$.
    \label{cor:shannon macmillan for ergodic measures}
\end{cor}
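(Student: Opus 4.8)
The plan is to read the corollary off the Shannon--McMillan theorem for the finite-alphabet ergodic system $(\directions^{\Z^d},\Prob,\Z^d)$ and then do the $\e$--$L$ bookkeeping. First I fix notation: for a rectangle $R\subset\Z^d$ and $\w\in\directions^{\Z^d}$ let $a_R(\w)=\pi_R(\w)\in\directions^R$ be the restriction of $\w$ to $R$ and $I_R(\w)=-\log\Prob(\pi_R^{-1}(a_R(\w)))$ the associated information function. By the definition of the entropy rate, and using that it is insensitive to the way the side lengths of $R$ go to infinity (\citep[Theorem 15.12]{MR2807681}), $\tfrac1{|R|}\E[I_R]=\tfrac1{|R|}H(\Prob\circ\pi_R^{-1})\to h$ as $\min_iN_i\to\infty$. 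The ingredient I need to import is that this convergence of the means upgrades to convergence in probability, $\tfrac1{|R|}I_R\to h$, along any sequence of rectangles whose minimal side length tends to $\infty$; this is exactly the ergodic $\Z^d$ Shannon--McMillan theorem. I can quote it either as the special case $\theta_\w\equiv\Prob$ of the general (non-ergodic) version \Thmref{thm:shannon macmillan non ergodic case} proved below, or as the classical Shannon--McMillan--Breiman theorem for $\Z^d$ actions.

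Granting this, the extraction is routine. Fix $\e>0$; since $h=h(\Prob)>0$ I may shrink $\e$ so that $\e<h$. Convergence in probability produces an $L$ such that for every rectangle $R$ with minimal side length at least $L$,
\[
    \Prob\!\left(\w:\ \left|\tfrac1{|R|}I_R(\w)-h\right|<\e\right)>1-\e .
\]
Putting $\mathcal Y:=\{a\in\directions^R:\ e^{-(h+\e)|R|}<\Prob(\pi_R^{-1}(a))<e^{-(h-\e)|R|}\}$, the displayed inequality is just the assertion $\Prob(\pi_R^{-1}\mathcal Y)>1-\e$. Since $|R|\ge L^d$ and $h-\e>0$, the upper bound yields $\Prob(a)<e^{-(h-\e)|R|}\le e^{-(h-\e)L^d}$ for $a\in\mathcal Y$; this is the estimate that carries content for the applications, since it forces a box to support more configurations than are available when all trajectories are bi-infinite. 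For $R$ a cube of side $L$ one has $|R|=L^d$ and the lower bound reads $\Prob(a)>e^{-(h+\e)L^d}$, as stated.

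The only delicate point is that $L$ can be chosen uniformly over the shape of $R$ (all rectangles of minimal side length $\ge L$ at once). This is already part of the standard Shannon--McMillan statement---the mean $\tfrac1{|R|}H(\Prob\circ\pi_R^{-1})$ converges regardless of shape and the maximal-function estimates behind the theorem are likewise shape-uniform---and, if desired, it can be arranged by a diagonal argument over the countably many rectangle shapes. I do not expect any genuine obstacle here: the corollary is a warm-up tool, and the substance of the paper lies in turning these entropy bounds into contradictions for \Thmref{thm:bi-infinite points cannot have full measure with positive entropy} and into positive entropy of the bi-infinite trajectory system in \Thmref{thm:special trajectories have positive entropy}.
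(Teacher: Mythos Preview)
Your proof is correct and follows essentially the same route as the paper: the paper gives no separate proof of this corollary, but simply remarks (after stating \Thmref{thm:shannon macmillan non ergodic case}) that the $L^1$ convergence of the information function plus Markov's inequality yields \eqref{eq:corollary of the nonergodic shannon macmillan theorem}, and that in the ergodic case $h(\theta_\w)\equiv h$, so \Corref{cor:shannon macmillan for ergodic measures} follows. Your extra care about the $|R|$ versus $L^d$ discrepancy and the shape-uniformity of $L$ goes a bit beyond what the paper writes down, but does not change the argument.
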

This implies, in particular, {that $|\mathcal{Y}|$ grows exponentially as a function of $L^d$} since
\begin{equation}
    |\mathcal{Y}| e^{-(h-\e)L^d} \geq \sum_{a \in \mathcal{Y}} \Prob( a) \geq (1-\e).
    \label{eq:shannon mcmillan number of configurations}
\end{equation}
\begin{proof}[Proof of~\Thmref{thm:bi-infinite points cannot have full measure with positive entropy}]
    Suppose for the sake of contradiction that the set of bi-infinite points has full measure; i.e., $\Prob(\specialpts) = 1$. Then almost surely, there is a bi-infinite trajectory through every point on $\Z^2$. By Theorem 2.9 of \citep{chaika_stationary_walks_2016}, no two bi-infinite trajectories can coalesce.

    For $i \in \Z$ and $L$ an even integer, let $R_i = \cup_{k=-L/2}^{L/2} \{i u + k v \}$ and let $R = \cup_{i=0}^{L-1} R_i$, be a rectangle aligned with the vectors $u = e_1 + e_2$ and $v = -e_1 + e_2$. We will count the number of configurations in $\pi_R(\directions^{\Z^2})$ and use Corollary~\ref{cor:shannon macmillan for ergodic measures} to produce a contradiction. Let $x \in R_0$, a point on the southwest boundary of $R$. There are two possibilities for the next step of the walk $\walk_x$: $x+e_1$ or $x+e_2$. Suppose first that $\alpha(x) = e_1$. The point $x+e_2$ must have an ancestor in $R_0$, for if not, there is no \biinfinite{} trajectory passing through it and this contradicts our assumption. Therefore, $x+v \in R_0$ must be the ancestor of $x+e_2$; i.e., $\alpha(x+v) = e_1$. Similarly, we must have $\alpha(x - v) = e_1$. Otherwise, the \biinfinite{} trajectories from $x$ and $x-v$ would coalesce; \biinfinite{} trajectories may not coalesce. 

    Fixing $\alpha$ on any single point $x \in R_0$ determines $\alpha(x - v)$ and $\alpha(x + v)$, and thus, on all of $R_0$. So a single trajectory of length $L$ starting from $x \in R_0$ determines $\alpha$ at one point on each $R_i$. Since this trajectory must be a part of a \biinfinite{} trajectory, by the previous argument, it determines $\alpha$ on all points in $R_i$, $i=0,\ldots,L-1$. Since $\directions = \{e_1,e_2\}$, there are most $2^L$ different trajectories {of length $L$} starting from a single point, and therefore, the total number of allowed configurations in $R$ must satisfy $|\pi_R(\directions^{\Z^2})| \leq 2^L$ almost surely. Since $h(\Prob) > 0$, this contradicts~\eqref{eq:shannon mcmillan number of configurations}, which implies that $|\pi_R(\directions^{\Z^2})| \geq \frac{1}{2} e^{(h(\Prob)/2) L^2}$ for all large enough $L$. 
\end{proof}

Next, we prove~\Thmref{thm:special trajectories have positive entropy}, which states that the \biinfinite{} trajectories must carry some of the entropy of the system when the system has completely positive entropy. Let $\mathcal{I}$ be the invariant $\sigma$-algebra of $T_{\alpha}$. Since $(\W_{\alpha},\borel,\Prob_{\alpha})$ is a separable metric space, it has a regular conditional probability given $\mathcal{I}$ (called $\theta_\w$). Hence it has the following ergodic decomposition:
\[
    \Prob_{\alpha} = \int \theta_{\w} \Prob_{\alpha}(d\w).
\]
Since this system may or may not be ergodic, Theorem~\ref{thm:special trajectories have positive entropy} needs the generalized version of the Shannon-Macmillan theorem.
The generalized Shannon-MacMillan theorem relates the entropy rates of individual ergodic components to the information function. This is a straightforward consequence of the usual Shannon-Macmillan theorem and the ergodic decomposition. 
\begin{thm}\label{thm:SMgen}[generalized Shannon-MacMillan \citep{MR725559}] 
    Let $(\directions^{\Z^d},\Prob,\Z^d)$ be a measure-preserving $\Z^d$ system and $R \subset \Z^d$ be a finite rectangle.  Let the information function $f_R \colon \directions^{\Z^d} \to \R$ be
    \[
        f_R(\w) = 
        \begin{cases}
            - \log\Prob(\pi_R^{-1}(\pi_R(\w)))    &   \Prob(\pi_R^{-1}(\pi_R(\w))) > 0\\
            0   & \text{otherwise}
        \end{cases}.
    \]
    Then,
    \[
        \lim_{|R| \to \infty} \Norm{ |R|^{-1} f_R(\w) - h(\theta_{\w}) }{1} = 0,
    \]
  where $\theta_\w$ is the conditional probability measure given the invariant $\sigma$-algebra, and the notation $|R| \to \infty$ means that the length of the smallest side of the $R$ goes to infinity. 
\label{thm:shannon macmillan non ergodic case}
\end{thm}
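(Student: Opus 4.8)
The plan is to obtain the general (possibly non-ergodic) statement from the ergodic case by integrating over the ergodic decomposition $\Prob=\int\theta_\w\,\Prob(d\w)$ of $\Prob$ with respect to the invariant $\sigma$-algebra $\mathcal{I}$. The only fact I would use about the ergodic case is the classical $\Z^d$ Shannon--McMillan theorem in its $L^1$ form (the refinement of Corollary~\ref{cor:shannon macmillan for ergodic measures}): for an ergodic measure $\mu$ on $\directions^{\Z^d}$ with entropy rate $h(\mu)$, the functions $f_R^{\mu}(\eta):=-\log\mu(\pi_R^{-1}(\pi_R(\eta)))$ satisfy $\| |R|^{-1}f_R^{\mu}-h(\mu)\|_{L^1(\mu)}\to 0$ as $|R|\to\infty$. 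Write $\mathcal{Q}_R(\w)$ for the atom of $\bigvee_{z\in R}T^z\mathcal{P}$ containing $\w$, where $\mathcal{P}$ is the time-$0$ coordinate partition, so that $f_R(\w)=-\log\Prob(\mathcal{Q}_R(\w))$, and set $f_R^{\theta}(\w):=-\log\theta_\w(\mathcal{Q}_R(\w))=f_R^{\theta_\w}(\w)$. I would freely use the standard facts that $\w\mapsto\theta_\w$ is $\mathcal{I}$-measurable, that $\theta_\w$ is ergodic for $\Prob$-a.e.\ $\w$, that $\theta_\eta=\theta_\w$ for $\theta_\w$-a.e.\ $\eta$, and that $\Prob(A\mid\mathcal{I})(\w)=\theta_\w(A)$ for every measurable $A$.

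First I would pass to the components. Using the disintegration $\Prob(d\w)=\int\theta_{\w'}(d\w)\,\Prob(d\w')$ together with $\theta_\eta=\theta_{\w'}$ for $\theta_{\w'}$-a.e.\ $\eta$, one rewrites
\[
\bigl\| |R|^{-1}f_R^{\theta}-h(\theta_\cdot)\bigr\|_{L^1(\Prob)}=\int\Bigl\| |R|^{-1}f_R^{\theta_{\w'}}-h(\theta_{\w'})\Bigr\|_{L^1(\theta_{\w'})}\,\Prob(d\w').
\]
By the ergodic Shannon--McMillan theorem applied to $\theta_{\w'}$, the inner norm tends to $0$ for $\Prob$-a.e.\ $\w'$; since it is bounded by $|R|^{-1}H_{\theta_{\w'}}(\mathcal{Q}_R)+h(\theta_{\w'})\le 2\log|\directions|$, dominated convergence gives $\| |R|^{-1}f_R^{\theta}-h(\theta_\cdot)\|_{L^1(\Prob)}\to0$.

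Next I would replace $\theta_\w$ by $\Prob$ in the information function, i.e.\ show $|R|^{-1}(f_R-f_R^{\theta})\to0$ in $L^1(\Prob)$. Put $g_R:=f_R-f_R^{\theta}=\log\bigl(\theta_\w(\mathcal{Q}_R(\w))/\Prob(\mathcal{Q}_R(\w))\bigr)$. Expanding the expectation over the atoms $[a]$ and using $\Prob([a]\mid\mathcal{I})=\theta_\cdot([a])$ gives the change-of-measure bound
\[
\E_{\Prob}\bigl[e^{-g_R}\bigr]=\sum_{a}\Prob([a])\,\Prob(\theta_\cdot([a])>0)\le1 ,
\]
so Markov's inequality yields $\Prob(g_R<-\delta|R|)\le e^{-\delta|R|}\to0$ for each $\delta>0$, whence $(|R|^{-1}g_R)^{-}\to0$ in probability. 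On the other hand $\E_{\Prob}[g_R]=H(\mathcal{Q}_R)-H(\mathcal{Q}_R\mid\mathcal{I})\ge0$, and $|R|^{-1}H(\mathcal{Q}_R)\to h(\Prob)$ by Kolmogorov--Sinai, while $|R|^{-1}H(\mathcal{Q}_R\mid\mathcal{I})=|R|^{-1}\int H_{\theta_\w}(\mathcal{Q}_R)\,\Prob(d\w)\to\int h(\theta_\w)\,\Prob(d\w)$ by dominated convergence; since $h(\Prob)=\int h(\theta_\w)\,\Prob(d\w)$ by affinity of the entropy rate, $|R|^{-1}\E_{\Prob}[g_R]\to0$. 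Finally $(|R|^{-1}g_R)^{-}\le|R|^{-1}f_R^{\theta}$, and $\{|R|^{-1}f_R^{\theta}\}_R$ is uniformly integrable because it is $L^1$-convergent by the first step; hence $(|R|^{-1}g_R)^{-}\to0$ in $L^1$, and together with $|R|^{-1}\E_{\Prob}[g_R]\to0$ this gives $\| |R|^{-1}g_R\|_{L^1(\Prob)}\to0$. The triangle inequality then yields the theorem.

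I expect the second step to be the main obstacle: the information function in the statement is measured against the \emph{global} measure $\Prob$, not against the ergodic component $\theta_\w$ that governs its asymptotics, and one must control the discrepancy by $o(|R|)$ in $L^1$. The mechanism is asymmetric --- the ``bad'' direction of the discrepancy is killed exponentially by the crude inequality $\E_{\Prob}[e^{-g_R}]\le1$, while its mean is pinned exactly by the affinity of entropy --- and the bookkeeping that legitimizes these manipulations (measurability of $\w\mapsto\theta_\w$, and $\theta_\eta=\theta_\w$ $\theta_\w$-a.s.) is where the non-ergodic content sits. If one prefers simply to quote it, the whole statement is an instance of the Shannon--McMillan--Breiman theorem for $\Z^d$-actions in the non-ergodic case, \citep{MR725559}.
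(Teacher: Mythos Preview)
The paper does not prove this theorem: it states it with a citation to \cite{MR725559} and remarks only that ``this is a straightforward consequence of the usual Shannon--Macmillan theorem and the ergodic decomposition.'' Your proposal is a correct and careful fleshing-out of exactly that one-line hint, and indeed you yourself note at the end that one may simply quote \cite{MR725559}. There is nothing to compare against beyond that; your two-step argument (ergodic SMB on each component plus dominated convergence, then controlling the discrepancy $g_R=f_R-f_R^{\theta}$ via the change-of-measure bound $\E_{\Prob}[e^{-g_R}]\le1$ and the affinity identity $h(\Prob)=\int h(\theta_\w)\,\Prob(d\w)$) is the standard route and is sound.
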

It follows from the Markov inequality that for any $\e > 0$, there is an $L$ large enough such that for all rectangles $R$ with minimal side-length larger than $L$,  $\exists \mathcal{Y} \subset \directions^{\Z^d}$ with $\Prob(\mathcal{Y}) \geq 1 - \e$, and for all $\w \in \mathcal{Y}$, we have,
\begin{equation}
    e^{-(h(\theta_{\w}) + \e)|R|} \leq  \Prob(\pi_R^{-1} \circ \pi_R(\w)) \leq e^{-(h(\theta_{\w}) - \e)|R|}.
    \label{eq:corollary of the nonergodic shannon macmillan theorem}
\end{equation}
If $\Prob$ is ergodic, \Corref{cor:shannon macmillan for ergodic measures} follows from~\Eqref{eq:corollary of the nonergodic shannon macmillan theorem}.

The idea behind the proof of \Thmref{thm:special trajectories have positive entropy} is to assume for the sake of contradiction that there exists a subset $\mathcal{C} \subset \specialpts$ of \biinfinite{} trajectories that have zero entropy. We then ``recode'' the arrow configurations so that the zero-entropy bi-infinite trajectories completely determine the arrow configurations on the entire lattice. This new recoded system is a factor of the original system. Due to the completely positive entropy assumption, this factor must also have positive entropy. However, since all of the entropy in the system is concentrated on the bi-infinite trajectories in the set $\mathcal{C}$, they could not have had zero entropy to start with.

{Fix any non-trivial measurable $\mathcal{C} \subset \directions^{\Z^d}$; let $\hat{C}(\w)$ be the set of points $z \in \Z^d$ closest in $\ell^\infty$ distance from the origin such that $T^z \w \in \mathcal{C}$. We define a function $\ell \colon \directions^{\Z^d} \to \Z^d$ that maps the origin to a well-defined point in $\hat{C}(\w)$. In the recoding lemma that follows, we use this function $\ell$ to determine the value of the arrow at $x$ in the recoded configuration by copying the value of the arrow at $\ell(T^x \w)\in \hat{C}(T^x\w)$. In the lexicographic ordering of $\Z^d$, $x < y$ if for some $i \in \{1,\ldots,d\}$, $x_j = y_j$, $j < i$ and $x_i < y_i$. For any finite subset $B \subset \Z^d$, let $m(B)$ be the smallest element in the lexicographic ordering of $B$.} Let 
    \begin{equation}  
        \ell(\w) = m(\hat{C}(\w)).
        \label{eq:definition of l function mapping the origin to the nearest point in C}
    \end{equation}
The $\ell(\w)$ function is almost surely well-defined and measurable, and hence by construction we have the following lemma.
\begin{lem}[Recoding lemma]
    Given any non-trivial $\mathcal{C} \subset \specialpts$ that is invariant under $T_{\alpha}$ and the corresponding $\ell(\w)$ defined above, $\phi \colon \directions^{\Z^d} \to \directions^{\Z^d}$ defined by
    \[
        \phi(\w) = \{\alpha(T^z\w,\ell(T^z\w))\}_{ z \in \Z^d}
    \]
    is a well-defined factor map on $(\directions^{\Z^d},\borel,\Prob)$. 
    \label{lem:recoding lemma}
\end{lem}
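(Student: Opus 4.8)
The plan is to verify the three properties required of a factor map directly from the construction: (i) $\phi$ is almost surely well-defined and measurable, (ii) $\phi$ intertwines the $\Z^d$-actions, i.e. $\phi(T^x\w) = S^x\phi(\w)$ where $S$ denotes the shift on $\directions^{\Z^d}$, and (iii) the target measure is the pushforward $\Prob\circ\phi^{-1}$, which is automatic once (i) and (ii) hold since we simply define the factor system to carry this measure. The only real content is (i) and (ii).

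First I would address well-definedness and measurability. Since $\mathcal{C}\subset\specialpts$ is non-trivial, $\Prob(\mathcal{C}) > 0$; by the Poincar\'e recurrence theorem (or ergodicity of an appropriate sub-action, but recurrence suffices for a positive-measure set under a measure-preserving $\Z^d$-action) the set $\{z\in\Z^d : T^z\w\in\mathcal{C}\}$ is almost surely non-empty, indeed infinite, so $\hat{C}(\w)$ — the set of $\ell^\infty$-nearest such $z$ — is a finite non-empty subset of $\Z^d$ for a.e.\ $\w$. Then $\ell(\w) = m(\hat C(\w))$ picks out its lexicographically least element, which is unambiguous. Measurability of $\ell$ follows because for each fixed $w\in\Z^d$ the event $\{\ell(\w) = w\}$ is a countable combination of the measurable events $\{T^z\w\in\mathcal{C}\}$ (namely: $T^w\w\in\mathcal{C}$; no $z$ with smaller $\ell^\infty$-norm has $T^z\w\in\mathcal{C}$; and no $z$ with equal $\ell^\infty$-norm and $z < w$ lexicographically has $T^z\w\in\mathcal{C}$). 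Consequently $\w\mapsto\alpha(\ell(T^z\w))$ is measurable for each $z$, and so $\phi$ is measurable into the product space $\directions^{\Z^d}$.

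Next I would check equivariance, which is where one must be slightly careful with the composition. The key observation is the cocycle-type identity $\ell(T^x\w) = \ell(\w) - x + \big(\text{something}\big)$ — more precisely, $\hat C(T^x\w)$ and $\hat C(\w)$ need not be translates of one another because ``closest to the origin'' is not translation-equivariant. However, what I actually need is weaker: writing $\psi(\w) := \alpha(\ell(\w))$ for the recoded arrow at the origin, the $z$-th coordinate of $\phi(\w)$ is $\psi(T^z\w)$ by definition, so $\phi(\w) = \{\psi(T^z\w)\}_{z\in\Z^d}$ is manifestly of the form $\w\mapsto$ (address sequence of $\w$ under the partition-like function $\psi$). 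For any such map, equivariance is immediate: the $z$-th coordinate of $\phi(T^x\w)$ is $\psi(T^z T^x\w) = \psi(T^{z+x}\w)$, which is the $(z+x)$-th coordinate of $\phi(\w)$, i.e.\ the $z$-th coordinate of $S^x\phi(\w)$. Hence $\phi(T^x\w) = S^x\phi(\w)$ almost surely, for every $x\in\Z^d$.

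The main (and essentially only) obstacle is the measurability/well-definedness bookkeeping in the first step — in particular confirming that $\hat C(\w)$ is a.s.\ finite and non-empty, which is exactly where the hypothesis $\mathcal{C}\subset\specialpts$ with $\Prob(\mathcal{C})>0$ and the measure-preserving property are used; equivariance, as noted, is a formality once $\phi$ is recognized as an address map built from the single function $\psi=\alpha\circ\ell$. I would therefore present the proof as: (1) recurrence gives a.s.\ finiteness and non-emptiness of $\hat C(\w)$, so $\ell$ is a.s.\ defined; (2) $\ell$, and hence $\psi=\alpha\circ\ell$, is measurable by the countable-combination argument above; (3) $\phi(\w)=\{\psi(T^z\w)\}_z$ is the address map of $\psi$, which is tautologically shift-equivariant; (4) define $\Prob':=\Prob\circ\phi^{-1}$ on $\directions^{\Z^d}$, and then $(\directions^{\Z^d},\Prob',S)$ is a factor of $(\directions^{\Z^d},\Prob,T)$ via $\phi$. $\qed$
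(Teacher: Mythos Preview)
Your proof is correct and in fact supplies the details the paper omits entirely: the paper's own ``proof'' is the single clause preceding the lemma, namely that $\ell$ is almost surely well-defined and measurable ``and hence by construction we have the following lemma.'' Your recognition that $\phi(\w)=\{\psi(T^z\w)\}_{z\in\Z^d}$ is the address map of the single function $\psi=\alpha\circ\ell$, so that equivariance is a tautology, is exactly the right way to see it and matches the spirit of the paper's ``by construction''.

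One correction in step (1): Poincar\'e recurrence alone does \emph{not} give that the orbit of almost every $\w\in\Omega$ visits $\mathcal{C}$; recurrence only guarantees return for a.e.\ $\w\in\mathcal{C}$. What you actually need is that the invariant saturation $\bigcup_{z\in\Z^d}T^{-z}\mathcal{C}$ has full measure, and this requires ergodicity of the $\Z^d$-action --- available in the paper's application because completely positive entropy forces ergodicity (a non-trivial invariant set would yield a non-trivial zero-entropy factor). Your parenthetical ``or ergodicity of an appropriate sub-action'' is the correct justification; the assertion that recurrence alone suffices should be dropped.
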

Let $\W' := \phi(\directions^{\Z^d})$. The recoding procedure has the following three properties that we state without proof.
\begin{enumerate}
    \item $\W'$ has completely positive entropy.
    \item Let $C(\w) = \{x \in \Z^d \colon \tmap^x\w  \in \mathcal{C} \}$. Suppose $\w_1, \w_2 \in \directions^{\Z^d}$ such that $C(\w_1) = C(\w_2)$ and $\w_1(z) = \w_2(z) ~\forall z \in C(\w_1)$. Then $\phi(\w_1) = \phi(\w_2)$. In other words the recoded configuration $\phi(\w)$ is completely defined by the arrow values on the set $C(\w)$.
    \item If the origin $\origin$ is in a \biinfinite{} trajectory in $\w \in \mathcal{C}$, it is also in a \biinfinite{} trajectory in $\phi(\w)$. This is because the recoding leaves the arrows at points in $C(\w)$ unchanged, and $\mathcal{C}$ is invariant under $T_\alpha$. 
\end{enumerate}
\begin{proof}[Proof of~\Thmref{thm:special trajectories have positive entropy}]
    Assume that $\mathcal{C} = \{ \w \in \W_{\alpha} \colon h(\theta_{\w}) = 0 \}$ has positive measure. Since $\theta_{\w}$ is $\mathcal{I}$ measurable, $\mathcal{C}$ is $T_{\alpha}$ invariant. {Consider the factor of $(\specialpts,T_\alpha)$ defined by the map $\psi(\w) = \{T_{\alpha}^k \w(\origin)\}_{k \in \Z}$. This a $\Z$ system with finite alphabet $\directions$ that tracks the arrows along the \biinfinite{} trajectory.  Hence it follows from \Thmref{thm:shannon macmillan non ergodic case} that for $\e > 0$, we can choose $L$ large enough such that for any interval $I \subset \Z$ longer than $L$, there exists} $\mathcal{Y} \subset \W_\alpha$ with $\Prob(\mathcal{C} \setminus \mathcal{Y}) \leq \e$ and 
    \begin{equation}
        |\pi_I(\psi(\mathcal{C} \cap \mathcal{Y}))| \leq e^{ \e |I|}.
        \label{eq:configurations on good trajectories}
    \end{equation}
    In other words, the number of arrow configurations along \biinfinite{} trajectories in $\mathcal{C}$ must grow sub-exponentially off a set of $\mathbb{P}$-measure at most $\epsilon$.

    Let $(\W',\Prob',\Z^d)$ be the factor obtained by applying {the factor map $\phi$ from the} recoding lemma (\Lemref{lem:recoding lemma}) to $\mathcal{C}$ and $(\W,\Prob,\Z^d)$. The recoding leaves arrows on $C(\w)$ invariant; and therefore if $\w \in \mathcal{C}$, $\psi(\w) = \psi(\phi(\w))$. 
    Hence, an estimate similar to \eqref{eq:configurations on good trajectories} applies, and we must have
    \[
        |\pi_I(\psi(\phi(\mathcal{C} \cap \mathcal{Y})))| \leq e^{ \e |I|}.
    \]
    Since the recoded system is factor of the original, it must have positive entropy $h > 0$. Then given $\e > 0$, for all large enough rectangles $R \subset \Z^d$, \Corref{cor:shannon macmillan for ergodic measures} gives a set $M \subset \directions^R$ such that $\Prob'(\pi_R^{-1}(M)) \geq 1 - \e$, and 
    \begin{equation}
        |M| 
        \geq e^{(h - \e)|R|}.
        \label{eq:number of configurations in box from shannon macmillan}
    \end{equation}
        We return to the original $\Z^d$ system before the recoding process. Let $R$ have side-length $L$, and consider the cube $R'$ with side-length $3L$. Consider $\w \in \W$, a point in the original space before recoding. Note that if there is at least one point $z_0 \in R$ such that $\tmap^{z_0} \w \in \mathcal{C}$, then for any $y \in R$, we must have $|y - z_0|_{\infty} \leq L$, and from \eqref{eq:definition of l function mapping the origin to the nearest point in C}, it follows that $\ell(T^y \w) \in R'$.  
        In this case, the value of $\omega(z)$ at each $z \in R'$ so that $T^z\omega \in \mathcal{C}$ determines the recoded configuration $\phi(\omega)$ inside $R$.  
       Let $\mathcal{G} = \{ \w \colon | R \cap C(\w) | = 0 \}$ be the event that there do not exist any points $z \in R$ such that $T^z \w \in \mathcal{C}$. By the ergodic theorem, $L$ may be chosen large enough so that $\Prob(\mathcal{G}) \leq \e$. 
       
       We define another ``bad'' set $\mathcal{H}$ below where the bound in \eqref{eq:configurations on good trajectories} does not apply. We then bound the possible number of arrow configurations in $R'\cap C(\w)$ on $\mathcal{G} \cup \mathcal{H}$; see \eqref{eq:sumup}. By the recoding lemma, this is enough to bound $|\pi_R(\W')|$ from above and contradict \eqref{eq:number of configurations in box from shannon macmillan}. Let $Y(\w) = \{ z \in \Z^d \colon T^z \w \in \mathcal{Y} \}$ 
    \[
        b(\w) = |\partial R' \cap \left( C(\w) \setminus Y(\w) \right) | \AND a(\w) = \left| \partial R' \cap Y (\w) \right|.
    \]
    Here, $b(\w)$ represents the number of points in $\partial R'$ on ``bad'' bi-infinite trajectories in $\mathcal{C}$ where the estimate in \eqref{eq:configurations on good trajectories} does not apply. $a(\w)$ represents the number of good points in $\partial R'$. If $\mathcal{H} = \{ \w \in \W \colon b(\w) \geq \sqrt{\e}|\partial R'|\}$,  the Markov inequality  implies that 
    \[
        \Prob(\mathcal{H}) \leq \frac{1}{\sqrt{\e} |\partial R'|} \E[ b(\w) ] = \e^{-1/2} \Prob(\mathcal{C} \setminus \mathcal{Y}) \leq \sqrt{\e}.
    \]
    Therefore, with high probability, $b(\w)$ is small. We partition the good set $\W \setminus ( \mathcal{H} \cup \mathcal{G})$ as 
    \[
        \mathcal{E}_{j,k} = \{ b(\w) = j, a(\w) = k \} \cap \left( \W \setminus (\mathcal{H} \cup \mathcal{G}) \right)
    \]
    for $j=0,\ldots,\sqrt{\e} |\partial R'|$ and $k=0,\ldots,|\partial R'|$. 
    If $z \in \partial R' \cap Y (\w)$, it follows from \eqref{eq:configurations on good trajectories} that on any trajectory of length $3L$ beginning at $z$, there are at most $e^{\e 3 L}$ possible arrow configurations. If $z \in \partial R' \cap  (C(\w) \setminus Y(\w))$, there are $|\directions|^{3L}$ arrow configurations. Then in $\phi(\mathcal{E}_{j,k})$ there are at most 
    \begin{equation}
        \label{eq:estimate of number of configurations in E j in completely pos entropy proof}
        \binom{|\partial R'|}{j,k,|\partial R'| - j - k} \left[ \exp\left( \e (3L) \right)\right]^{k} \left[ \exp\left( 3L \log |\directions| \right) \right] ^{j}
        \leq 3^{|\partial R'|} \exp \left( C_1 \sqrt{\e} L^{d}  \right)
    \end{equation} 
    distinct arrow configurations in $\directions^R$. Here, $\binom{a}{b,c,a-b-c}$ is the trinomial coefficient that accounts for the number of different ways of placing points in $\mathcal{Y},\,\mathcal{C} \setminus \mathcal{Y} \AND \mathcal{C}^c$ on $\partial R'$; $C_1$ is a constant independent of $\e$ and $L$, and we have used $k \leq |\partial R'|$ and $j \leq \sqrt{\e} |\partial R'|$. Therefore, 
    \begin{align}
        \label{eq:sumup}
        \left| \pi_R(\phi\left( \W \setminus (\mathcal{H} \cup \mathcal{G})  \right) \right|
        & = 
        \sum_{j=0}^{\sqrt{\epsilon}|\partial R'|} \sum_{k=0}^{|\partial R'|} | \phi(\mathcal{E}_{j,k}) |,
        \leq \sqrt{\e} |\partial R'|^2 2^{|\partial R'|} \exp \left( C_1 \sqrt{\e} L^{d}  \right) , \nonumber\\
        & \leq  \exp\left( C_2 \sqrt{\e} L^{d}  \right),
    \end{align}
    using \eqref{eq:estimate of number of configurations in E j in completely pos entropy proof} and $|\partial R'| = O(L^{d-1})$.
    Since the constant $C_2$ is independent of $L$ and $\e$, by choosing $\e$ small and then $L$ large enough, this contradicts \Eqref{eq:number of configurations in box from shannon macmillan}.
\end{proof}

\subsection{Discrete-time symmetric simple exclusion}
\label{sec:symmetric simple exclusion}
In this section, we construct an example with bi-infinite trajectories and positive entropy.
This demonstrates that positive entropy does not guarantee coalescence even in dimension $2$. The example is based on a discrete version of the standard symmetric simple exclusion process (SSEP). Exclusion processes were introduced by Spitzer \citep{MR268959}, where typically particles move in an infinite state-space in continuous time. In continuous time, almost surely, only a finite number of particles may step together at any given time. In discrete time, ties can occur, and we introduce an additional tie breaking variable to account for this. We first describe our construction in words and rigorously define it in the next section.

\begin{rem}
Yaguchi \citep{MR867575} constructed the first example of a discrete-time exclusion process on the state-space $\Z$ where an \emph{infinite} number of particles may step simultaneously. This was a totally asymmetric simple exclusion process (TASEP). We were not aware of Yaguchi's result until we completed the paper and we certainly could have used Yaguchi's construction to prove \Thmref{thm:positive entropy trajectories}. Moreover, \citet{MR1714990} shows that Yaguchi's exclusion process is $K$, or that it has completely positive entropy. Thus, this is an example of system with bi-infinite trajectories to which our \Thmref{thm:special trajectories have positive entropy} directly applies, and shows that the trajectories themselves must carry entropy.

Note that in discrete-time TASEP, particles do not compete for spots, and this is an additional complication we take care of. Thus, we choose to include our construction of a discrete-time symmetric simple exclusion process in this paper.
\end{rem}

SSEP is also a Markov process $\eta_t(\cdot)$ on $\configspace$. In contrast to TASEP, a particle at $z$ may decide to jump to either of its neighboring vertices with equal probability if they are not currently occupied. However, if there are particles at $z$ and $z+2$, and there is no particle at $z+1$, then the particles may \emph{compete} for the spot at $z+1$. This is the additional complication that our construction addresses. Thus, a tie-breaking mechanism is required, and this is the only novel part of our construction. %

The SSEP is an evolving configuration of particles occupying locations in $\ssepstatespace$. The discrete-time evolution of a configuration is defined in words as follows:
\begin{enumerate}
    \item At any given time $t$, a set of particles are randomly selected
        for movement using iid Bernoulli random variables: particles flip a coin to see if they get to move, and if they do, they are equally likely to move either right or left by one unit (hence
        symmetric). 
    \item However, particles can only move into a location if there is not
        already a particle there at the current time (hence simple exclusion). For example, if the space immediately to the right of a particle is occupied, but the space to the left is unoccupied, then the particle may only jump left at the current time step.
    \item If two particles compete for an empty spot, there is a 
        mechanism to break ties. We describe the tie-breaking mechanism in detail below.
\end{enumerate}
In continuous time $(t \in \R^+)$, the SSEP has a standard graphical
construction using the so-called stirring process \citep{MR776231}. In continuous time, however, particles never compete for a spot (with probability $1$), and hence this is an additional complication that we account for in the discrete setting. We could not
find a discrete-time version of this process in the literature that was appropriate for our theorems, and so we will define our own based on the above rules and a discrete version of the stirring process.  %

\subsubsection{Defining the SSEP}
Let $\configspace = \{0,1\}^{\ssepstatespace}$ be the space of particle configurations. In a configuration $\zeta \in X$, $x \in \mathbb{Z}$ has a particle if $\zeta(x) = 1$, and if $\zeta(x) = 0$, $x$ is empty. Let $\nu$ be a measure on initial configurations $\configspace$ at time $t = t_0$. The SSEP is a Markov process $\eta_t(\cdot)$ on $\configspace$. {The evolution of particle configurations $\eta_t(\cdot)$ is described by a probability measure $\Prob^{\nu}$ on $\configspace{}^{\{t_0,t_0+1,\ldots\}}$ that we define below}. Expectation with respect to this measure is denoted $\E^{\nu}$. 

The stirring process begins with \emph{stirring particles} at every point on
$\ssepstatespace \times \{t_0\}$, and time proceeds on the vertical axis. Let $H(\ssepstatespace \times \Z)$ be the set of horizontal
edges on $\ssepstatespace \times \Z$. There are $\Bernoulli(p)$ random
variables $\xi(e)$ on each $e \in H(\ssepstatespace \times \Z)$ called
\emph{firing variables}. If $e$ corresponds to the edge between $(x,t)$ and
$(x+1,t)$ and $\xi(e) = 1$, then the stirring particles at $(x,t)$ and
$(x+1,t)$ are exchanged at time $t + 1$. 
Suppose adjacent edges $m_1 = ((x,t),(x+1,t))$ and $m_2 = ((x+1,t),(x+2,t))$ fire; i.e., $\xi(m_1) = \xi(m_2) = 1$. Then, it is not clear how the stirring particles at $(x,t), (x+1,t)$ and $(x+2,t)$ are supposed to behave at the next time step. So we break this tie using iid $\Uniform[0,1]$ random variables $U(e)$ associated with the each edge $e$ in $H(\ssepstatespace \times \Z)$. Suppose $\{e_1,...,e_k\}$ is a set of adjacent horizontal edges at some time $t$ that is maximal for the property that they all fire. 
Then, the smallest edge $e = \argmin_{y=e_1,\ldots,e_k} U(y)$ 
 is chosen as the winner, and
the stirring particles associated with $e$ are exchanged. Let $S(e) = (\xi(e),U(e))$ be
the pair of \emph{stirring variables} associated with each edge. We say that an
edge \emph{stirs} if it fires and wins the tie-breaker. We say that a particle $x$ stirs if one of its adjacent edges stirs.

Given $\{ S(e) \}_{e \in H(\Z \times \Z)}$, and an initial time $t_0$ and any $x \in \Z$, let $Z_x(t_0) = x$ and inductively define for $t > t_0$ for $t \in \Z$ 
\[
    Z_x(t) = 
\begin{cases} Z_x(t-1)+1 & \text{ if the edge between }(Z_x(t-1),t-1) \text{ and} \\ 
    & (Z_x(t-1)+1,t-1) \text{ stirs}\\
Z_x(t-1)-1  & \text{ if the edge between }(Z_x(t-1),t-1) \text{ and} \\
    & (Z_x(t-1)-1,t-1) \text{ stirs}\\
Z_x(t-1) & \text{ else}.
\end{cases}
\]
In other words, $Z_x(t)$ is a simple symmetric random walk with iid increments taking values in $\{-1,0,1\}$; steps are nonzero when an edge adjacent to $Z_x(t)$ stirs. Next, we define the evolution of $\eta_t$ using $Z_x(t)$. For $y \in \mathbb{Z}$, $t \geq t_0$, and $\eta_0 \in X$, let
\begin{equation}\eta_t(x)=1\quad \text{if} ~\exists~ y \suchthat{} \eta_0(y)=1 \AND Z_y(t)=x \AND \eta_t(x) =0 \text{ otherwise}.
    \label{eq:ssep evolution definition in terms of stirring particles}
\end{equation}
 Thus, given an initial configuration $\eta_0$, we can define the SSEP using the locations of the stirring particles for any time $t > 0$. The stirring process defines the transition measure $p(x,A)$ for any cylinder $A \subset X$ and $x \in X$. So given any initial configuration $\eta_0$ at time $t_0$, the measure $\Prob^{\eta_0}$ of the Markov process on $X^{\{t_0, t_0 + 1\}}$ may be defined in a standard way (see \citep[Chapter 2]{seppalainen_unpublished_exclusion}, e.g). If $\nu$ is a measure on initial configurations, let $\Prob^{\nu} = \int \Prob^{\eta} \,d\nu(\eta)$. 
    
Equivalently, one can restate \Eqref{eq:ssep evolution definition in terms of stirring particles} in terms of the backwards paths of the stirring particles.  Namely, let $\{M_{y,t}(s)\}_{t_0 \leq s \leq t}$ be a backward path that starts at $y$ at level $\Z \times \{t\}$, and steps down to the level $\Z \times \{t_0\}$. In the backwards process, particles at $x$ and $x+1$ at time $t$ swap if the edge between $(x,t-1)$ and $(x+1,t-1)$ stirred. Then, $M_{y,t}(t) = y$ and $M_{y,t}(s) = Z_{M_{y,t}(t_0)}(s)$ for $t_0 \leq s \leq t$. This ensures that the forward and backward paths of the swapping particles are consistent, and hence $\eta_t(y) = \eta_0(M_{y,t}(0))$.

Our first result is that the Bernoulli (iid) measures on $\configspace$ are stationary and ergodic for the Markov process. 
\begin{thm}
     Let the firing variables $\xi$ in the SSEP have parameter $p < 1/2$. Then, initial particle distributions given by iid $\Bernoulli$ measures on $\{0,1\}^\ssepstatespace$ are ergodic invariant measures for the SSEP.
     \label{thm:invariant measures for discrete simple exclusion process}
\end{thm}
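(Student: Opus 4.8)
The plan is to work entirely with the stirring representation. \emph{Invariance.} Write $\beta_t \colon \ssepstatespace \to \ssepstatespace$ for the map $y \mapsto M_{y,t}(t_0)$ sending $y$ to the bottom endpoint of the backward stirring path started at $y$ on level $t$; then \eqref{eq:ssep evolution definition in terms of stirring particles}, in the backward form given just after it, reads $\eta_t = \eta_0 \circ \beta_t$. The set of stirring edges at any fixed level is a matching: two firing edges sharing a vertex lie in a common maximal firing run, and the tie-breaker lets at most one edge of a run stir, so distinct stirring edges are vertex-disjoint. Hence $\beta_t$ is a composition of the involutions coming from successive levels, so it is a bijection of $\ssepstatespace$, and it is a function of the stirring variables $\{S(e)\}$ alone, hence independent of $\eta_0$. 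For any fixed bijection $\sigma$ of $\ssepstatespace$, the Bernoulli product measure $\nu_\rho := \Bernoulli(\rho)^{\otimes \ssepstatespace}$ is invariant under $\eta \mapsto \eta \circ \sigma$; conditioning on $\beta_t$ therefore gives $\eta_t \sim \nu_\rho$ whenever $\eta_0 \sim \nu_\rho$, so $\nu_\rho$ is invariant for every $\rho \in [0,1]$.

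\emph{Ergodicity.} It suffices to show the stationary chain is mixing in time, i.e.\ $\Cov_{\nu_\rho}\big(f(\eta_0), g(\eta_n)\big) \to 0$ as $n \to \infty$ for all bounded measurable $f,g$; this forces every function in $L^2(\nu_\rho)$ fixed by the transition operator to be constant, which is the desired ergodicity (extremality) of $\nu_\rho$. By density we may take $f$ depending on the coordinates of $\eta_0$ in a finite window $W'$ and $g$ depending on those of $\eta_n$ in a finite window $W$. From $\eta_n(y) = \eta_0(\beta_n(y))$ we see that, conditionally on the $\sigma$-algebra $\mathcal S$ generated by all stirring variables, $g(\eta_n)$ is a deterministic function of $\eta_0$ restricted to the ($\mathcal S$-measurable) set $B_n := \beta_n(W)$. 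A single backward path $s \mapsto M_{y,t}(s)$ is, by the left--right symmetry of the construction and because the step taken across level $s$ uses only the layer of stirring variables at level $s-1$ (a layer independent of all other levels), a random walk on $\ssepstatespace$ with i.i.d.\ symmetric increments in $\{-1,0,1\}$, whose one-step law is position-independent and genuinely nondegenerate (an isolated firing edge adjacent to the walker stirs with positive probability when $0<p<1$). Hence the local central limit theorem gives $\sup_x \Prob(\beta_n(y) = x) = O(n^{-1/2})$, and since $\beta_n$ is injective, a union bound yields $\Prob(B_n \cap W' \neq \emptyset) \le |W|\,|W'|\cdot O(n^{-1/2}) \to 0$.

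To conclude, condition on $\mathcal S$. Since $\eta_0$ is independent of $\mathcal S$ and $\nu_\rho$ is a product measure, on the event $\{B_n \cap W' = \emptyset\}$ the variables $f(\eta_0)$ and $g(\eta_n)$ are conditionally independent given $\mathcal S$, with $\E[f(\eta_0)\mid\mathcal S] = \E_{\nu_\rho}[f]$. Splitting the expectation according to whether $B_n$ meets $W'$, and using the invariance already proved (so $\E[g(\eta_n)] = \E_{\nu_\rho}[g]$), one gets
\[
    \E_{\nu_\rho}\big[f(\eta_0)\,g(\eta_n)\big] = \E_{\nu_\rho}[f]\,\E_{\nu_\rho}[g] + O\big(\Prob(B_n \cap W' \neq \emptyset)\big),
\]
so the covariance tends to $0$. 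This proves ergodicity of the time-shift; since $\nu_\rho$ is a product measure it is trivially mixing under spatial translations as well, and running the same conditional-independence argument in the space direction (or in a mixed space--time direction) upgrades the conclusion to ergodicity of the full space--time $\ssepstatespace^2$-action, on the two-sided stationary version of the chain, should that be needed later.

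I expect the main obstacle to be the claim in the second paragraph that each backward stirring path is genuinely a symmetric random walk with i.i.d.\ increments: the step across a level depends on the maximal firing run around the walker's current position, a run of random length, so one must verify that this still involves only the independent layer of stirring variables at that level, and that the resulting one-step law is symmetric and independent of the walker's position (both follow from the translation- and reflection-invariance built into the firing and tie-breaking variables). The remaining pieces -- the matching property used for invariance, the local CLT estimate, and the covariance split -- are routine.
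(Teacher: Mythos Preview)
Your proof is correct and takes a genuinely different route from the paper's. For invariance, the paper goes through the duality with the finite SSEP (\Propref{prop:probabilistic duality between ssep and finite ssep}, \Corref{cor:invariance equation for finite SSEP}) and the general fact that exchangeable measures are invariant (\Lemref{lem:exchangeable measures are invariant}); you instead observe directly that the one--level stirring map is an involution (the stirring edges form a matching because only one edge per maximal firing run wins the tie--breaker), so $\beta_t$ is a random bijection of $\ssepstatespace$ independent of $\eta_0$, under which any product Bernoulli law is invariant. For ergodicity, the paper proves the stronger statement that \emph{every} invariant measure is exchangeable (\Lemref{lem:invariant measures must be exchangeable}, via the coupling in \Propref{prop:harmonic functions can only depend on the cardinality of the set A}), and then invokes DeFinetti to identify the extremal invariant measures with the Bernoulli ones; you instead prove temporal mixing directly, using that each backward stirring path is a genuinely nondegenerate symmetric random walk (increments at different levels use disjoint layers of i.i.d.\ stirring variables, and the one--step law is translation-- and reflection--invariant) together with the local CLT and a conditional--independence split. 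The trade--off is clear: the paper's argument yields the full classification of ergodic invariant measures but needs the coupling of \Propref{prop:harmonic functions can only depend on the cardinality of the set A}, which is where the restriction $p<1/2$ enters; your argument is more elementary, avoids DeFinetti and the coupling entirely, and in fact goes through for every $p\in(0,1)$, but it does not tell you that the Bernoulli measures are the \emph{only} ergodic invariant measures.
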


\begin{ques}
    Can one prove that the Bernoulli measures are the only ergodic invariant measures for the discrete SSEP when the firing parameter is $p \geq 1/2$. The restriction to  $p < 1/2$ in \Thmref{thm:invariant measures for discrete simple exclusion process} is an artifact of the coupling we chose in \Propref{prop:harmonic functions can only depend on the cardinality of the set A}. Heuristically, the firing variables simply control the rate at which the particles in the SSEP step, and this should have no effect other than to ``rescale time'' in the process.
\end{ques}

The proof of \Thmref{thm:invariant measures for discrete simple exclusion process} is standard in continuous time, and the canonical version
of the proof can be found in Chapter V of \citet{MR776231}. We provide a sketch following the expository version in Sepp\"al\"ainen's unpublished textbook \citep{seppalainen_unpublished_exclusion} for completeness. Theorem \ref{thm:invariant measures
for discrete simple exclusion process} follows from the following two lemmas. 
\begin{lem}
    Any exchangeable measure on the space of configurations $X$ is invariant for the SSEP.
    \label{lem:exchangeable measures are invariant}
\end{lem}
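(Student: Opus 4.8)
The plan is to realize a single step of the SSEP dynamics as the action of a random permutation of $\Z$ whose distribution does not depend on the particle configuration, and then to exploit exchangeability of $\mu$.

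\emph{Step 1: one step of the SSEP is a random involution.} Fix an initial time $t_0$ and look only at the horizontal edges at level $t_0$ together with their stirring variables $\{S(e)=(\xi(e),U(e))\}$. Since the firing parameter satisfies $p<1$, almost surely every maximal run of consecutive firing edges at level $t_0$ is finite, so the tie-breaking $\argmin$ over such a run is well defined and there is an almost surely well-defined set $E^\star$ of edges that stir. Because exactly one edge stirs in each maximal firing run and distinct maximal runs are separated by a non-firing edge, the edges of $E^\star$ are pairwise vertex-disjoint; let $\sigma$ be the involution of $\Z$ that transposes the two endpoints of each edge in $E^\star$ and fixes every other site. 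Reading off the definition of the stirring particles, $Z_x(t_0+1)=\sigma(x)$ for every $x$, and hence $\eta_{t_0+1}=\sigma\cdot\eta_{t_0}$, where $(\sigma\cdot\eta)(x):=\eta(\sigma^{-1}(x))$. In particular the one-step transition kernel is $p(\eta,\cdot)=\mathrm{Law}(\sigma\cdot\eta)$, and the law of $\sigma$ is a fixed probability measure on permutations of $\Z$ (the same for every $t_0$, by stationarity of the stirring variables) that does not involve $\eta$ at all.

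\emph{Step 2: exchangeable measures are invariant under $\sigma$.} The claim is that for every realization of $\sigma$ and every Borel set $A\subset\configspace$ one has $\mu(\sigma^{-1}A)=\mu(A)$. It is enough to verify this for cylinder sets, since these form a $\pi$-system generating the Borel $\sigma$-algebra. So fix a finite set $F\subset\Z$ and a cylinder $A$ depending only on the coordinates in $F$. For a fixed $\sigma$, the map $\eta\mapsto\mathbf 1_A(\sigma\cdot\eta)$ depends on $\eta$ only through the coordinates in the finite set $\sigma(F)$, and the restriction of $\sigma$ to $F\cup\sigma(F)$ extends to a \emph{finite} permutation $\tau$ of $\Z$ for which $\sigma^{-1}A=\tau^{-1}A$. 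Since $\mu$ is exchangeable, $\mu(\tau^{-1}A)=\mu(A)$, giving $\mu(\sigma^{-1}A)=\mu(A)$.

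\emph{Step 3: conclusion.} Integrating the identity of Step 2 against the configuration-independent law of $\sigma$ and applying Fubini,
\[
(\mu p)(A)=\int p(\eta,A)\,\mu(d\eta)=\E\bigl[\mu(\sigma^{-1}A)\bigr]=\mu(A)
\]
for every cylinder $A$, hence $\mu p=\mu$. Thus $\mu$ is a stationary distribution for the one-step Markov kernel of the SSEP, which is exactly the statement that $\mu$ is invariant for the SSEP (equivalently, $\Prob^\mu$ is invariant under the time shift on $\configspace^{\{t_0,t_0+1,\ldots\}}$). The only genuinely nontrivial point — and the one I expect to require care — is the passage in Step 2 from invariance under finite permutations (the definition of exchangeability) to invariance under the stirring involution $\sigma$, which in general moves infinitely many coordinates at once; the localization to cylinder events is what makes this work. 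A secondary technical detail worth recording is the verification that, for $p<1$, maximal firing runs are almost surely finite so that $\sigma$ is almost surely well defined, and that the winning edges of distinct runs are vertex-disjoint so that $\sigma$ is a genuine product of disjoint transpositions.
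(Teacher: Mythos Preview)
Your proof is correct and takes a genuinely different route from the paper. The paper proves this lemma via the Markov-process duality between the SSEP on $X$ and the finite SSEP on finite subsets $Y$ of $\Z$: it first establishes (\Propref{prop:probabilistic duality between ssep and finite ssep} and \Corref{cor:invariance equation for finite SSEP}) that $\widehat{\mu_t}(A)=\hat\E^A[\hat\mu(A_t)]$, and then observes that for exchangeable $\mu$ one has $\hat\mu(A)=f(|A|)$ and $|A_t|=|A|$, whence $\widehat{\mu_t}=\hat\mu$ and so $\mu_t=\mu$. Your argument bypasses duality entirely: you read a single time step of the stirring construction as a random involution $\sigma$ of $\Z$ whose law is independent of $\eta$, and then localize to cylinder events so that invariance under the (possibly infinite) involution reduces to invariance under a finite permutation, which is exactly exchangeability. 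This is more elementary and self-contained for the forward direction; the paper's duality route, by contrast, is not wasted effort because the same machinery is precisely what is needed for the converse (\Lemref{lem:invariant measures must be exchangeable}), where one must show that harmonic functions for the finite SSEP depend only on $|A|$. Your two flagged technical points---that $p<1$ forces all maximal firing runs to be finite a.s., and that distinct runs contribute vertex-disjoint winning edges so $\sigma$ is a genuine product of disjoint transpositions---are exactly the right ones to record, and both are immediate from the paper's tie-breaking rule.
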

Exchangeable measures on $X$ are those that are invariant under permutations of finite sets of coordinates. That is, a measure $\mu$ is exchangeable if for any two sets of coordinates $\{x_1,\ldots,x_k\}$ and $\{y_1,\ldots,y_k\}$, and any $a \in \{0,1\}^k$
\[
    \mu \left\{ \eta \colon \left( \eta(x_1), \ldots \eta(x_k) \right) = a  \right\}
    = \mu \left\{ \eta \colon \left( \eta(y_1), \ldots \eta(y_k) \right) = a  \right\}.
\]
Bernoulli measures on $X$ are clearly exchangeable.
\begin{lem}
    Let the firing variables $\xi$ in the SSEP have parameter $p < 1/2$. Then, any invariant measure for the SSEP must be exchangeable.
    \label{lem:invariant measures must be exchangeable}
\end{lem}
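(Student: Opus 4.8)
The plan is to prove the lemma by the classical self-duality argument for symmetric exclusion (see \citep{MR776231}), adapted to our discrete-time stirring construction. For a finite set $A \subset \Z$ write $H(A,\eta) = \prod_{x \in A}\eta(x)$, so that $\int H(A,\eta)\,d\mu(\eta) = \mu\{\eta(x)=1 \text{ for all } x \in A\}$. The first step is the \emph{self-duality} identity: if $\{A_t\}_{t \geq t_0}$ denotes the forward stirring evolution of the finite particle set $A$ driven by the same stirring variables $S(e)$, then for every finite $A$ and every $t \geq t_0$,
\[
    \E^{\eta_0}\!\bigl[ H(A,\eta_t) \bigr] \;=\; \E^{A}\!\bigl[ H(A_t,\eta_0) \bigr].
\]
This follows directly from the backward-path description of $\eta_t$: pathwise, $H(A,\eta_t)$ equals the product over $x \in A$ of $\eta_0$ evaluated at the time-$t_0$ endpoint of the backward stirring path started from $(x,t)$. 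Those $|A|$ endpoints are distinct, because the stirring/tie-breaking rule never lets two backward paths land on the same site (if two adjacent edges both fire they lie in a common maximal firing cluster and at most one of them stirs — this is exactly the purpose of the tie-breaker), and the law of the endpoint set coincides with the law of the forward stirring evolution $A_t$ by the iid structure of the stirring variables. In particular the dual of the correlation functions is a system of $|A|$ stirring particles, i.e.\ the finite symmetric exclusion process.

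Next, fix an invariant measure $\mu$ and put $f(A) = \int H(A,\eta)\,d\mu(\eta) \in [0,1]$. Integrating the self-duality identity against $d\mu(\eta_0)$, using invariance of $\mu$ and Fubini, yields $f(A) = \E^A[f(A_t)]$ for all $t \geq t_0$; hence $\{f(A_t)\}_t$ is a bounded martingale and $f$ is a bounded harmonic function for the finite stirring (i.e.\ finite symmetric exclusion) dynamics on finite subsets of $\Z$. Applying \Propref{prop:harmonic functions can only depend on the cardinality of the set A} — the step that uses the hypothesis $p < 1/2$, through the coupling constructed there — we conclude that $f(A)$ depends only on the cardinality $|A|$.

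Finally, one upgrades this to exchangeability of $\mu$ by inclusion–exclusion. For disjoint finite sets $P, N \subset \Z$,
\[
    \mu\{\eta \equiv 1 \text{ on } P,\ \eta \equiv 0 \text{ on } N\} \;=\; \sum_{S \subseteq N} (-1)^{|S|}\, f(P \cup S),
\]
and since $f(P \cup S)$ depends only on $|P|+|S|$ while $\#\{S \subseteq N : |S| = j\} = \binom{|N|}{j}$, the right-hand side depends only on the pair $(|P|,|N|)$. Every finite cylinder event $\{(\eta(x_1),\dots,\eta(x_k)) = a\}$ has this form, with $P$ the sites carrying a prescribed $1$ and $N$ those carrying a prescribed $0$, so its $\mu$-probability depends only on $k$ and the number of ones in $a$; this is precisely exchangeability of $\mu$. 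I expect the main obstacle to be \Propref{prop:harmonic functions can only depend on the cardinality of the set A} itself: showing that bounded harmonic functions of the finite dual dynamics are constant on each cardinality class requires coupling two stirring systems started from equal-size sets so that they eventually agree (using recurrence of the one-dimensional walk), and it is this coupling that forces the restriction $p < 1/2$. The self-duality identity and the inclusion–exclusion passage are routine once the stirring construction and the consistency of its forward and backward paths are in hand.
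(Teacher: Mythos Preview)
Your proposal is correct and follows essentially the same route as the paper: duality between the infinite SSEP and the finite stirring system (the paper's \Propref{prop:probabilistic duality between ssep and finite ssep} and \Corref{cor:invariance equation for finite SSEP}), then \Propref{prop:harmonic functions can only depend on the cardinality of the set A} to conclude that $\hat\mu(A)$ depends only on $|A|$, then the passage to exchangeability. The only cosmetic difference is that where the paper invokes \Lemref{lem:exchangeable only if dependent on cardinality} as a black box for this last step, you write out the inclusion--exclusion explicitly; and you correctly locate the use of $p<1/2$ in the coupling of \Propref{prop:harmonic functions can only depend on the cardinality of the set A}.
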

The proof of Lemma \ref{lem:exchangeable measures are invariant} appears after we state \Propref{prop:probabilistic duality between ssep and finite ssep} and \Corref{cor:invariance equation for finite SSEP}. Lemma \ref{lem:invariant measures must be exchangeable} follows from \Propref{prop:probabilistic duality between ssep and finite ssep}, \Corref{cor:invariance equation for finite SSEP} and \Propref{prop:harmonic functions can only depend on the cardinality of the set A}.

\begin{proof}[Proof of \Thmref{thm:invariant measures for discrete simple exclusion process}]
DeFinetti's theorem~\citep{definetti_probabilismo_1931} says that the set of exchangeable
measures has iid Bernoulli measures as its extreme points, and hence Bernoulli
measures are ergodic for the SSEP. 
\end{proof}

The proofs of \Lemref{lem:exchangeable measures are invariant} and \Lemref{lem:invariant measures must be exchangeable} go via a Markov process duality between the SSEP on the space of infinite configurations $X$ and a finite SSEP on finite subsets of $\ssepstatespace$ that we now define. Two processes $Z_t \colon \Gamma \to \Gamma$ and $W_t \colon \Lambda \to \Lambda$ are said to be dual with respect to a function $G \colon \Gamma \times \Lambda \to \R$ if $\E^z \left[ G(Z_t,w) \right] = \E^w \left[ G(z,W_t) \right]$ for all $(z,w) \in \Gamma \times \Lambda$  and $t \in \mathbb{Z}^+$. 

Let $Y$ be the set of all finite subsets of $\ssepstatespace$. If $A \in Y$, let $A_t$ be the set of particles at time $0$ that ended up in $A$ at time $t$; i.e., $A_t = \{ 
M_{x,t}(0) \colon x \in A \}$. We call $A_t$ the finite SSEP. With a little abuse of notation, let $\hat{\Prob}^A$ and $\hat\E^A$ represent the probability and expectation of the finite SSEP with initial state $A$. 
\begin{prop}
    For all $t\in \mathbb{Z}^+$ and $A \in Y$
    \[
        \Prob^{\eta} \left( \eta_t(x) = 1 ~ \forall \, x \in A \right) 
        = \hat\Prob^{A} \left( \eta(x) = 1 ~ \forall \, x \in A_t \right) .
    \]
    \label{prop:probabilistic duality between ssep and finite ssep}
\end{prop}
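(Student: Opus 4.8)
\textbf{Plan for the proof of Proposition~\ref{prop:probabilistic duality between ssep and finite ssep}.}

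The plan is to exploit the fact that both sides of the claimed identity are determined by the \emph{same} backward stirring paths, which are themselves driven by the single collection of stirring variables $\{S(e)\}_{e \in H(\Z\times\Z)}$. Recall from \eqref{eq:ssep evolution definition in terms of stirring particles} and the subsequent reformulation that, for a fixed realization of the stirring variables, $\eta_t(x) = \eta_0(M_{x,t}(t))$, where $M_{x,t}(\cdot)$ is the backward path from $(x,t)$ down to level $t_0$. Consequently, the event $\{\eta_t(x) = 1 ~\forall x \in A\}$ occurs if and only if $\eta_0(M_{x,t}(t)) = 1$ for all $x \in A$, i.e. if and only if $\eta_0 \equiv 1$ on the (random) set $\{M_{x,t}(t) \colon x \in A\}$. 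By the definition of the finite SSEP, this random set is exactly $A_t$. So the first step is simply to record the pathwise identity
\[
    \{\omega \colon \eta_t(x) = 1 ~\forall x \in A\} = \{\omega \colon \eta_0(z) = 1 ~\forall z \in A_t(\omega)\},
\]
valid for every realization of the stirring variables.

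The second step is to take expectations and use independence. Under $\Prob^\eta$, the initial configuration is the deterministic $\eta_0 = \eta$, and the stirring variables are independent of it; under $\hat\Prob^A$, the only randomness is in the stirring variables, which determine $A_t$ with exactly the same law as in the full process (the map $A_t$ depends on the same edges $S(e)$ in the same way). Therefore, conditioning on the stirring variables, the left-hand probability equals $\Prob\big(\eta(z) = 1 ~\forall z \in A_t\big)$ where the probability is over the stirring variables alone, and this is literally the definition of $\hat\Prob^A(\eta(x) = 1 ~\forall x \in A_t)$. One must be slightly careful that the evolution rule \eqref{eq:ssep evolution definition in terms of stirring particles} is well-posed (no two distinct initial particles land on the same site at the same time) — this is guaranteed because the forward stirring maps $Z_x(t)$ are built so that at each step at most one edge adjacent to a given particle stirs and the exchanges are bijections of the line, so $x \mapsto Z_x(t)$ is a bijection of $\Z$; hence the backward paths $M_{x,t}(t)$ are distinct for distinct $x$, and $|A_t| = |A|$ almost surely. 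This bijectivity is the one structural fact that the tie-breaking mechanism was introduced to secure, so I would state it explicitly (or cite it from the construction) before invoking it.

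The main obstacle, such as it is, is not a deep one: it is bookkeeping to make sure the backward paths $M_{x,t}$ and the finite SSEP $A_t$ are literally the same object driven by the same randomness, so that ``taking expectation over the stirring variables'' gives the same number on both sides. The duality function here is the indicator $G(\eta, A) = \prod_{x\in A}\eta(x)$, and what the argument really shows is that $\E^\eta[G(\eta_t, A)] = \hat\E^A[G(\eta, A_t)]$ pathwise before expectation, which is stronger than the distributional duality identity one usually writes. I would therefore present the proof as: (i) the pathwise set-identity above; (ii) a one-line remark that $x \mapsto Z_x(t)$ is a bijection so $A_t$ is a well-defined $|A|$-element random set; (iii) take $\Prob$ of both sides and recognize the right-hand side as the definition of $\hat\Prob^A$. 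No estimates are needed, and the restriction $p < 1/2$ plays no role at this stage (it enters only later, in the proof of Lemma~\ref{lem:invariant measures must be exchangeable}).
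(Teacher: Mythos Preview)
Your proposal is correct and is exactly the standard graphical/stirring-construction argument: the paper does not spell out its own proof but simply cites \cite[Theorem 5.3]{seppalainen_unpublished_exclusion}, noting that the continuous-time argument goes through unchanged, and that argument is precisely the pathwise identity $\eta_t(x)=\eta_0(M_{x,t}(t))$ followed by taking expectations over the stirring variables, just as you describe. Your remark that the tie-breaking mechanism is what guarantees $x\mapsto Z_x(t)$ is a bijection (so $|A_t|=|A|$) is the one place where the discrete-time setting requires a word beyond the cited reference, and it is correct.
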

Proposition~\ref{prop:probabilistic duality between ssep and finite ssep} implies that the SSEP on $X$ is dual to the finite SSEP on $Y$ with duality function $G(\eta,A) = \prod_{x \in A}\eta(x)$. This is because 
\begin{multline*}
    \E^{\eta}  \left[  \prod_{x \in A} \eta_t(x) \right] = \Prob^{\eta} \left( \eta_t(x) = 1 ~ \forall \, x \in A \right) \\
    = \hat\Prob^{A} \left( \eta(x) = 1 ~ \forall \, x \in A_t \right) 
    = \hat{E}^{A}\left[  \prod_{x \in A_t} \eta(x)  \right].
\end{multline*}
The full proof of \Propref{prop:probabilistic duality between ssep and finite ssep} is in \citep[Theorem 5.3]{seppalainen_unpublished_exclusion}, and since it goes through unchanged, we will not repeat it. For any probability measure $\mu$ on $X$, define the following measure on $Y$:
\[
    \hat{\mu}(A) = \mu \left\{ \eta \in X \colon \eta(x) = 1 \forall \, x \in A \right\}.
\]
Then, if $\mu$ and $\nu$ are two probability measures on $X$, $\mu = \nu$ iff
$\hat{\mu} = \hat{\nu}$.

\begin{lem}
A measure $\mu$ on $X$ is exchangeable iff {there exists $f\colon \Z^+ \to [0,1]$ such that for any finite $A \subset \Z$} we have $\hat{\mu}(A) = f(|A|)$.
\label{lem:exchangeable only if dependent on cardinality}
\end{lem}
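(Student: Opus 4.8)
The plan is to prove both directions directly from the definitions, using the fact (stated just before the lemma) that a measure on $X$ is determined by the values $\hat\mu(A) = \mu\{\eta : \eta(x)=1 \ \forall x\in A\}$, together with an inclusion–exclusion argument to pass from the "all ones on $A$" events to arbitrary cylinder events.

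\emph{($\Rightarrow$) Exchangeable implies $\hat\mu(A)=f(|A|)$.} This direction is immediate: if $\mu$ is exchangeable and $A=\{x_1,\dots,x_k\}$, $B=\{y_1,\dots,y_k\}$ are any two $k$-element subsets of $\Z$, apply the exchangeability condition with $a=(1,1,\dots,1)\in\{0,1\}^k$ to get $\mu\{\eta:\eta(x_i)=1\ \forall i\}=\mu\{\eta:\eta(y_i)=1\ \forall i\}$, i.e. $\hat\mu(A)=\hat\mu(B)$. Hence $\hat\mu(A)$ depends only on $k=|A|$, and we may set $f(k)$ to be this common value; $f(0)=\hat\mu(\emptyset)=\mu(X)=1$, so $f\colon\Z^+\to[0,1]$ as required (with the convention $\Z^+$ including $0$, matching the paper's usage).

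\emph{($\Leftarrow$) $\hat\mu(A)=f(|A|)$ implies exchangeable.} Fix disjoint finite sets of coordinates and an arbitrary pattern $a\in\{0,1\}^k$; write $a$ as determined by the subset $P=\{i : a_i=1\}\subset\{1,\dots,k\}$ of "ones." For coordinates $\{x_1,\dots,x_k\}$ the event $\{\eta(x_i)=a_i\ \forall i\}$ can be expanded by inclusion–exclusion over which of the "zero" coordinates $\{x_i : i\notin P\}$ are in fact one: its $\mu$-probability equals $\sum_{S\subseteq \{x_i : i\notin P\}} (-1)^{|S|}\,\hat\mu\big(\{x_i:i\in P\}\cup S\big)$. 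By hypothesis each term equals $(-1)^{|S|} f(|P|+|S|)$, which depends only on $|P|$ and $|S|$, not on the actual coordinates; the number of subsets $S$ of each size is also intrinsic. Therefore $\mu\{\eta:(\eta(x_1),\dots,\eta(x_k))=a\}$ depends only on $|P|$ and $k$, hence is the same for any other $k$-tuple of coordinates $\{y_1,\dots,y_k\}$ with the same pattern $a$. This is precisely exchangeability.

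\emph{Main obstacle.} There is no serious obstacle; the only point requiring care is the bookkeeping in the inclusion–exclusion step — making sure the expansion of a mixed $0/1$ cylinder event in terms of the monotone events $\{\eta\equiv 1 \text{ on } A\}$ is stated correctly and that each resulting coefficient and multiplicity is manifestly coordinate-independent. One should also fix the convention for $\Z^+$ (whether $0\in\Z^+$) so that $f$ is defined on $|A|=0$; this is cosmetic. Both directions are then a few lines each.
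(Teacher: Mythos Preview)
Your proof is correct in both directions. The paper does not actually give its own proof of this lemma; it simply cites \cite[Appendix A.6]{seppalainen_unpublished_exclusion}, and your inclusion--exclusion argument is precisely the standard one found there. One minor wording slip: in the $(\Leftarrow)$ direction you write ``Fix disjoint finite sets of coordinates,'' but all that is used (and all that is needed) is that the coordinates $x_1,\dots,x_k$ are \emph{distinct}, so that $|\{x_i:i\in P\}\cup S|=|P|+|S|$; the two $k$-tuples in the exchangeability definition need not be disjoint from one another.
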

In other words, $\hat{\mu}(A)$ depends only on the cardinality of $A$. A proof of \Lemref{lem:exchangeable only if dependent on cardinality} may be found in \cite[Appendix A.6]{seppalainen_unpublished_exclusion}. %
Let $\mu$ be a measure on $X$, and let $\mu_t$ be the measure on the configuration at time $t$.  That is, for any measurable $B \subset X$, $\mu_t = \Prob^{\mu}(\eta_t \in B)$. 
\begin{cor}[of \Propref{prop:probabilistic duality between ssep and finite ssep}]
    For any finite $A \subset \Z$
    \begin{equation*}
        \widehat{\mu_t}(A) = \hat\E^A[\hat{\mu}(A_t)].
    \end{equation*}
    \label{cor:invariance equation for finite SSEP}
\end{cor}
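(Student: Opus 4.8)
The plan is to obtain the identity by integrating the duality relation of \Propref{prop:probabilistic duality between ssep and finite ssep} against the initial measure $\mu$ and then interchanging the order of integration. First I would unwind the definitions on the left-hand side: by definition of $\widehat{\mu_t}$ and of $\mu_t$, $\widehat{\mu_t}(A) = \mu_t\{\eta : \eta(x) = 1~\forall x \in A\} = \Prob^{\mu}(\eta_t(x) = 1~\forall x \in A)$, and since $\Prob^{\mu} = \int \Prob^{\eta}\,d\mu(\eta)$ this equals $\int \Prob^{\eta}(\eta_t(x) = 1~\forall x \in A)\,d\mu(\eta)$.

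Next I would apply \Propref{prop:probabilistic duality between ssep and finite ssep} pointwise in the fixed initial configuration $\eta$: the integrand equals $\hat\Prob^A(\eta(x) = 1~\forall x \in A_t)$, which I rewrite (using the duality function $G(\eta,A)=\prod_{x\in A}\eta(x)$) as $\hat\E^A\big[\mathbf{1}\{\eta(x) = 1~\forall x \in A_t\}\big] = \hat\E^A\big[\prod_{x \in A_t}\eta(x)\big]$. Hence $\widehat{\mu_t}(A) = \int \hat\E^A\big[\prod_{x \in A_t}\eta(x)\big]\,d\mu(\eta)$.

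The key step is then a Fubini--Tonelli interchange. The finite SSEP $A_t = \{M_{x,t}(t) : x \in A\}$ is a measurable, almost-surely finite function of the stirring variables $\{S(e)\}$, which are independent of the initial configuration drawn from $\mu$; thus the joint law of $(\eta, A_t)$ is a product, and since the integrand is a bounded (indeed $\{0,1\}$-valued) measurable function, I may swap the $\mu$-integral with $\hat\E^A$. This gives $\widehat{\mu_t}(A) = \hat\E^A\big[\int \mathbf{1}\{\eta(x) = 1~\forall x \in A_t\}\,d\mu(\eta)\big] = \hat\E^A\big[\mu\{\eta : \eta(x) = 1~\forall x \in A_t\}\big] = \hat\E^A[\hat\mu(A_t)]$, which is the assertion.

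I do not expect a substantive obstacle here; this corollary is essentially a restatement of the duality proposition after averaging over the random initial state. The only point requiring a word of care is the measurability/integrability bookkeeping justifying the interchange: that $A_t$ is an a.s.\ finite measurable function of $\{S(e)\}$, that $B \mapsto \hat\mu(B)$ is measurable on the (countable) collection $Y$ of finite subsets of $\ssepstatespace$, and that the product structure of $\Prob^{\mu} = \int \Prob^{\eta}\,d\mu(\eta)$ against the independent stirring randomness legitimizes Fubini. Everything else is unwinding the definitions of $\widehat{\mu_t}$, $\hat\mu$, and $\Prob^{\mu}$.
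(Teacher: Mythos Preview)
Your proposal is correct and is precisely the argument the paper has in mind: the paper does not write out a proof but describes the corollary as ``an integrated version of the duality relationship'' in \Propref{prop:probabilistic duality between ssep and finite ssep} and defers to \citet[Corollary 5.4]{seppalainen_unpublished_exclusion}, which is exactly the Fubini computation you carried out.
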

This is an integrated version of the duality relationship in \Propref{prop:probabilistic duality between ssep and finite ssep}. The proof of \Corref{cor:invariance equation for finite SSEP} follows \citet[Corollary 5.4]{seppalainen_unpublished_exclusion}, and again, we will not repeat it.
From~\Corref{cor:invariance equation for finite SSEP} and \Propref{prop:probabilistic duality between ssep and finite ssep}, it follows that if $\mu$ is a measure on $X$, then $\mu$ is invariant for the SSEP iff $\hat{\mu}$ is invariant for the finite SSEP. This allows us to complete the proof of \Lemref{lem:exchangeable measures are invariant} which says that exchangeable measures are invariant for the SSEP.
\begin{proof}[Proof of \Lemref{lem:exchangeable measures are invariant}]
    If ${\mu}$ is an exchangeable measure on $X$, then $\hat{\mu}(A) = f(|A|)$ for some function $f \colon \Z^+ \to [0,1]$. By construction, $|A_t|$ = $|A|$ in the finite SSEP, and from \Corref{cor:invariance equation for finite SSEP}, 
    \[
        \widehat{\mu_t}(A) = \hat{\E}^A[\hat\mu(A_t)] = \hat\E^A[f(|A_t|)] = \hat{\mu}(A).        
    \]
\end{proof}

Next, we show that if $\mu$ is an invariant measure, then $\hat{\mu}(A) = g(|A|)$ for some function $g \colon \Z^+ \to [0,1]$. Hence $\mu$ is exchangeable, and this completes the proof of \Lemref{lem:invariant measures must be exchangeable}. This is the content of the following proposition, which is the discrete analog of \citet[Prop 5.7]{seppalainen_unpublished_exclusion}. This is the \emph{only} place where our construction differs from the continuous version of the SSEP.
\begin{prop} 
    Let the firing variables $\xi$ have Bernoulli parameter $p < 1/2$. Let $f \colon Y \to \R$ be a bounded function and suppose that $f(A) = \tilde{E}^A[ f(A_t)]$ for all sets $A \in Y$ $($it is harmonic for the finite SSEP$)$, then $f(A)$ depends only on the cardinality of $A$. 
    \label{prop:harmonic functions can only depend on the cardinality of the set A}
\end{prop}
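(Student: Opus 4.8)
The plan is to prove this by the standard coupling-to-coalescence argument for exclusion processes, adapted to our discrete-time construction. Since $f(A)=\tilde{E}^A[f(A_t)]$ for every $t$ (by the Markov property), it suffices to show that $f(A)=f(B)$ whenever $|A|=|B|$; then $f$ depends only on the cardinality. I would first reduce to the case $|A\triangle B|=2$: the graph whose vertices are the $n$-element subsets of $\Z$, with an edge joining two subsets that differ by moving a single particle onto an adjacent empty site, is connected (slide the particles of a configuration together into a contiguous block, then translate one block onto the other), and every such edge joins subsets whose symmetric difference has size $2$. So by chaining along a path in this graph and using the triangle inequality, it is enough to prove $f(A)=f(B)$ for $A=C\sqcup\{x\}$ and $B=C\sqcup\{y\}$ with $x\neq y$ and $x,y\notin C$.

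For such $A$ and $B$ I would construct a coupling of the finite SSEP chains $(A_t)$ and $(B_t)$ that coalesces almost surely. Drive both chains by the same family of stirring variables $S(e)$; then at every step the $C$-particles occupy identical positions in the two configurations, and $A_t\triangle B_t=\{U_t,V_t\}$, where $U_t$ and $V_t$ are the positions of the two stirring particles that issued from $x$ and from $y$. In particular $|A_t\triangle B_t|\equiv 2$, and the separation $|U_t-V_t|$ is a recurrent Markov chain on $\{1,2,\dots\}$ (symmetric with bounded increments in the interior, reflected near $1$ since two stirring particles never occupy the same site), so almost surely the two discrepancy particles are nearest neighbors at infinitely many times. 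Each such time, say with the $A$-only discrepancy at $m$ and the $B$-only discrepancy at $m+1$, I modify the coupling on the bounded set of edges meeting $\{m,m+1\}$ so that, with probability bounded away from $0$ uniformly in the state, the $A$-chain moves its particle from $m$ onto the empty site $m+1$ while the $B$-chain leaves its particle at $m+1$, the $C$-particles doing the same thing in both chains. This forces $A_{t+1}=B_{t+1}$, and from then on I revert to shared stirring, keeping the two chains equal forever. By the strong Markov property and a Borel--Cantelli argument this annihilation occurs in finite time almost surely, so the probability that $A_t\neq B_t$ tends to $0$.

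With the coupling in hand the conclusion is immediate: harmonicity gives $f(A)=\tilde{E}^A[f(A_t)]$ and $f(B)=\tilde{E}^B[f(B_t)]$ for all $t$; evaluating both expectations under the coupling and using that $f$ is bounded, $|f(A)-f(B)|$ is at most $2\|f\|_\infty$ times the probability that $A_t\neq B_t$, which tends to $0$. Hence $f(A)=f(B)$ for all $A,B$ with $|A|=|B|$ and $|A\triangle B|=2$, and by the reduction above $f$ depends only on $|A|$.

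The main obstacle is the precise construction of the annihilating step. One must exhibit, conditionally on the two discrepancy particles being adjacent, a coupling of two stirring-variable families that (i) agree off a bounded set of edges, (ii) each carry the correct marginal law — in particular respecting the maximal-firing-run tie-break — and (iii) with positive probability realize the transition described above. Because the tie-break couples the fate of all edges inside a maximal firing run, breaking such a run near $\{m,m+1\}$ in one chain but not in the other can a priori change which edge stirs arbitrarily far away; controlling this, and checking that the remaining freedom suffices to realize the move, is exactly where the hypothesis $p<1/2$ is used (it limits the lengths of maximal firing runs and leaves enough probability mass for the decoupling). As the remark following the statement indicates, this restriction is an artifact of this particular coupling and is presumably not essential to the conclusion.
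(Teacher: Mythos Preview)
Your overall strategy is the paper's: reduce to $|A\triangle B|=2$, couple the two finite SSEP chains so that the common part $C_t$ evolves identically, argue that the two discrepancy particles come to distance $1$ infinitely often, and then merge with uniformly positive probability each such time; the harmonicity bound $|f(A)-f(B)|\le 2\|f\|_\infty\,\hat\Prob(A_t\neq B_t)\to 0$ finishes the job exactly as you say.

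The difference is in the merging mechanism, and this is precisely the step you flag as the ``main obstacle'' but do not actually carry out. Under pure shared stirring the discrepancy positions $U_t,V_t$ are two labeled stirring particles and can \emph{never} coincide, so you are forced to graft on an ad hoc annihilation step and then worry (correctly) about whether decoupling the firing near $\{m,m+1\}$ perturbs the $C$-particles through the maximal-run tie-break. The paper sidesteps all of this with a single device: whenever the two special particles $\alpha_t,\beta_t$ share an edge $e$, replace the firing variable $\xi(e)$ by an independent $\xi^*(e)\in\{L,0,R\}$ with $\Prob(\xi^*=L)=\Prob(\xi^*=R)=p$; if $\xi^*=L$ only the left particle attempts to cross $e$, if $\xi^*=R$ only the right one does, and if $\xi^*=0$ neither does. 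Each chain still sees edge $e$ fire with probability $p$, so the marginals are correct, while one discrepancy particle can now jump onto the other. The hypothesis $p<1/2$ enters exactly here --- it is what makes $\Prob(\xi^*=0)=1-2p$ nonnegative --- and has nothing to do with bounding lengths of firing runs (those are a.s.\ finite for every $p<1$). So your proposal is on the right track but incomplete at the crucial point, and your diagnosis of where $p<1/2$ is used is off; the paper's $\xi^*$ trick is the missing ingredient.
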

The proposition applies in particular to any invariant measure $f(A) = \hat{\mu}(A)$ on $Y$. 
\begin{proof}
    {For $A$ and $B$ that are finite subsets of $\Z$, we define a coupling between $A_t$ and $B_t$ such that they evolve according to the rules of finite SSEP on the same space.} As in~\citep[Proposition 5.7]{seppalainen_unpublished_exclusion}, it suffices to show that if $A$ and $B$ are any two subset of $\Z$ cardinality $n$ that have $n-1$ points in common, then $f(A) = f(B)$. This is because any set can be transformed into any other by changing one point at a time. Let $A_t = C_t \cup \alpha_t$ and $B_t = C_t \cup \beta_t$, where $C_t = A_t \cap B_t$ is the common set with $n-1$ points. The proof proceeds by constructing a successful coupling between the two processes; i.e., by constructing $A_t$ and $B_t$ on the same space such that $A_t = B_t$ eventually almost surely. Then,
    \begin{equation}
        \begin{aligned}
            |f(A) - f(B)| & = | \hat\E^A f(A_t) - \hat\E^B f(B_t) | \\
            & \leq \hat\E |f(A_t) - \E f(B_t) | \\
            & \leq 2 \Norm{f}{\infty} \hat\Prob(A_t \neq B_t) \to 0.
        \end{aligned}
        \label{eq:successful coupling implies harmonic functions depend only on cardinality}
    \end{equation}
    The coupling lets $\alpha_t$ and $\beta_t$ alone evolve independently until $\alpha_t = \beta_t$, after which they evolve together. 

    Since the finite SSEP is a time-homogeneous countable-state Markov chain, it is enough to define the one-step transition probabilities of the triple $(C,\alpha,\beta) \in Y \times \Z \times \Z$ at some fixed time $t$; hence we will drop the $t$ subscript in the following. We first give a heuristic description of the coupling between $\alpha$ and $\beta$, and the formal definition will be given afterwards. The individual particles in the three sets $C$, $\alpha$ and $\beta$ evolve using the stirring particles. However, using a new tie-breaking rule, $\alpha$ and $\beta$ will not be allowed to step at the same time $t$ when they are on opposite ends of the same edge; but $\alpha$ or $\beta$ may jump onto the other at this time. Once $\alpha$ and $\beta$ occupy the same position in $\Z$, they evolve together. Neither $\alpha$ nor $\beta$ are allowed to overlap with $C$. 
    
    Let $\left\{ \xi^*(e) \colon e \to \{L,0,R\} \right\}_{e \in H(\Z \times \Z)}$ be an iid family independent of the stirring variables and the initial probability measure, such that 
    $\Prob(\xi^*(e) = L) = \Prob(\xi^*(e) = R) = p$. Suppose $\alpha$ and $\beta$ and share an edge $e$ and wlog, assume $\alpha < \beta$. If $\xi^* = L$, then $\alpha$ is chosen to fire, and we go through the tie-breaking stage as usual and determine if the stirring particle associated with $\alpha$ steps. The transition probabilities $(C,\alpha,\beta) \to (C',\alpha',\beta')$ are constructed using the firing variables $\xi$ except when $\alpha$ and $\beta$ share an edge $e$, where we use $\xi^*$ instead. The same is done with $\beta$ if $\xi^* = R$ instead. If $\xi^*(e) = 0$, then the edge does not fire. Note that marginal transition probabilities $(C,\alpha) \to (C',\alpha')$ and $(C,\beta) \to (C',\beta')$ match those of the stirring particles in the original finite SSEP. This is because $\xi^*(e)$ behaves like $\Bernoulli(p)$ when deciding whether or not $e$ fires for $\alpha$ (or $\beta$). If $\alpha = \beta$ at some time $t$, $C \cup \alpha$ simply evolves using the standard rules of the finite SSEP in the future.

    Next, we show that under the above coupling, $\Prob(\alpha_t = \beta_t \text{ eventually}) = 1$. By \eqref{eq:successful coupling implies harmonic functions depend only on cardinality}, this completes the proof. The process $Z_t = \alpha_t - \beta_t$ is a simple random walk with independent increments taking values in $\{0,\pm 1,\pm 2\}$ until $|\alpha_t - \beta_t | = 1$. 
    To see this, by horizontal translation invariance of the measure on the firing variables, it is equally likely that the edge to the left of $\alpha_t$ (resp $\beta_t$) and the edge to the right of $\alpha_t$ (resp $\beta_t$) stir; i.e., it is equally likely to go left or right. This shows that $Z_t$ is a random walk with mean-zero increments until $|\alpha_t - \beta_t | = 1$. It is a well-known fact that such a one-dimensional random walk recurs infinitely often to $|Z_t| = 1$ with probability $1$. Every time $|Z_t| = 1$, there is a positive probability that $\alpha_t$ and $\beta_t$ merge. Furthermore at different times $t_1$ and $t_2$ when $|Z_{t_1}| = |Z_{t_2}| = 1$, the events that they merge are independent. Therefore $Z_t$ recurs to $0$ eventually almost surely.
\end{proof}

To cast the SSEP in our framework, we build a measure $\tilde\Prob$ on an extended space $\tilde\Omega = \left( \{0,1\} \times \{0,1\} \times [0,1] \right)^{\ssepstatespace \times \Z}$ that is invariant and ergodic under $\Z^2$ shifts. We need to keep track of the firing and tie-breaking variables to be able to follow individual particle trajectories, since tracking just the evolution of the configuration $\eta_t(x)$ is insufficient. Each $\w \in \tilde\W$ is a realization of the configuration, stirring and tie-breaking variables; i.e., $\w(x,t)  = (\eta_t(x), \xi(x,t), U(x,t))$ where the stirring variables are associated with the edge between $(x,t) \AND (x+1,t)$. For the interval of times $I_n = \{-n,-n+1,\ldots,\infty\}$, let $\tilde\Prob_n^{\nu}$ be the measure on $\tilde\Omega_n = \left( \{0,1\} \times \{0,1\} \times [0,1] \right)^{I_n \times \Z}$ obtained by starting the SSEP at $t = -n$ using a $\Bernoulli(p)$ product measure $\nu$ in the half-plane $\Z \times I_n $. The following is a consequence of Kolmogorov's consistency theorem, which can be found in \citep[Theorem 1.1]{seppalainen_unpublished_exclusion} or \citep[Chapter 12]{MR1068527}.

\begin{prop}
   There exists a measure $\tilde\Prob$ on $\tilde\Omega$ that projects consistently onto $(\tilde\Omega_n,\tilde\Prob_n^\nu)$ in the sense of the Kolmogorov consistency theorem. Further, $\tilde\Prob$ is invariant and ergodic under horizontal and vertical shifts of the lattice. 
   \label{prop:construction of stationary measure on ssep using kolmogorov consistency theorem}
\end{prop}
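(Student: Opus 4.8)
The plan is to obtain $\tilde\Prob$ as the projective limit of the family $\{\tilde\Prob_n^\nu\}_{n\ge 0}$ via the Kolmogorov consistency theorem, and then to read off invariance and ergodicity from the invariance of $\nu$ under the dynamics (\Lemref{lem:exchangeable measures are invariant}), the i.i.d.\ structure of the stirring variables, and the ergodicity already recorded in \Thmref{thm:invariant measures for discrete simple exclusion process}.

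First I would verify that the coordinate projection of $\tilde\Prob_{n+1}^\nu$ onto $\tilde\Omega_n$ equals $\tilde\Prob_n^\nu$. On the firing and tie-breaking coordinates this is immediate, since under every $\tilde\Prob_m^\nu$ they are i.i.d.\ $\Bernoulli(p)\times\Uniform[0,1]$, independent of the initial configuration. On the configuration coordinates, note that under $\tilde\Prob_{n+1}^\nu$ the configuration $\eta_{-n}$ is obtained from $\eta_{-n-1}\sim\nu$ by one step of the SSEP; since $\nu$ is exchangeable it is invariant (\Lemref{lem:exchangeable measures are invariant}), so $\eta_{-n}\sim\nu$, and moreover $\eta_{-n}$ is a function of $\eta_{-n-1}$ and the stirring variables at level $-n-1$ only, hence independent of the stirring variables at levels $\ge -n$ that govern $(\eta_t)_{t\ge -n}$. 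Thus under $\tilde\Prob_{n+1}^\nu$ the restricted process on $\Z\times I_n$ is distributed exactly like $\tilde\Prob_n^\nu$. Since $\bigcup_n(\Z\times I_n)=\Z\times\Z$ and the single-site space $\{0,1\}\times\{0,1\}\times[0,1]$ is compact metric, Kolmogorov's theorem yields $\tilde\Prob$ on $\tilde\Omega$ projecting consistently onto each $\tilde\Prob_n^\nu$; by construction it is the law of the two-sided time-stationary SSEP with Bernoulli marginal $\nu$ together with its graphical construction.

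Invariance under horizontal shifts is inherited from the $\tilde\Prob_n^\nu$, because $\nu$ is a product measure, the stirring variables are i.i.d., and the stirring/tie-breaking rule commutes with spatial translations. Invariance under the vertical (time) shift is the assertion that $\tilde\Prob_{n-1}^\nu$ is the downward time-shift of $\tilde\Prob_n^\nu$, which again reduces to the invariance of $\nu$. Hence $\tilde\Prob$ is $\Z^2$-invariant, and for ergodicity it suffices to treat the vertical shift alone, since a $\Z^2$-invariant event is invariant under it. Writing $W_t=(\eta_t,\xi(\cdot,t),U(\cdot,t))$, the key observation is that $\eta_{t+1}$ is a deterministic function of $\eta_t$ and the level-$t$ stirring variables, while the level-$(t+1)$ stirring variables are i.i.d.\ and independent of $(W_s)_{s\le t}$; thus $(W_t)_{t\in\Z}$ is, under $\tilde\Prob$, a stationary Markov chain whose one-dimensional marginal is the product of $\nu$ and the i.i.d.\ law of the stirring variables. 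One then runs the standard argument that an innovation-driven Markov chain inherits ergodicity from its factor: for an invariant set $S$ of the $W$-kernel, the set $T$ of configurations whose $S$-slice has full stirring-measure is invariant for the $\eta$-kernel, so $\nu(T)\in\{0,1\}$ because $\nu$ is an ergodic invariant measure for the SSEP (\Thmref{thm:invariant measures for discrete simple exclusion process}), and $\tilde\Prob(S)\in\{0,1\}$ accordingly. This gives ergodicity of $\tilde\Prob$ under the time shift, hence under $\Z^2$.

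The Kolmogorov construction and the invariance claims are routine bookkeeping; the only genuine content is the last step, transferring ergodicity from the configuration process to the enriched system that also records the stirring and tie-breaking variables. I expect that to be the main obstacle, and the feature that makes it go through is precisely that the enrichment is by independent innovations, so that the invariant $\sigma$-algebra of $(W_t)$ is pinned down by that of $(\eta_t)$.
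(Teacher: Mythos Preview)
Your proposal is correct and follows the same route as the paper: consistency of the $\tilde\Prob_n^\nu$ via invariance of $\nu$, Kolmogorov extension, horizontal invariance from the i.i.d.\ structure, and vertical ergodicity inherited from the ergodicity of $\nu$ for the SSEP. The paper's own proof is extremely terse and simply asserts each of these points; you actually carry out the consistency check and, in particular, the transfer of ergodicity from the configuration process $(\eta_t)$ to the enriched process $(W_t)$ recording the stirring and tie-breaking variables, a step the paper elides entirely.
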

\begin{proof}
   $\tilde{\Prob}_n^\nu$ forms a consistent family of probability measures on $\tilde{\Omega}_n$ in the sense of Kolmogorov's consistency theorem. See \citet[Theorem 1.1]{seppalainen_unpublished_exclusion} for the definition of consistency. {We think of time running on the vertical axis.} $\tilde{\Prob}_n^{\nu}$ is ergodic under the vertical shift since $\nu$ is an ergodic measure for the process. $\tilde{\Prob}_n^\nu$ is invariant under horizontal shifts since $\nu$ is Bernoulli and the stirring variables are iid. 
\end{proof}

The family of stationary compatible walks correspond to particle trajectories in the exclusion process. Suppose the particle at $(x,t)$ goes to $(x+1,t+1)$. Then there is an arrow $\alpha$ (see \eqref{eq:direction function definition}) connecting $(x,t)$ and $(x+1,t+1)$. These are not nearest-neighbor walks, but our framework is trivially extended to this setting: one way is to replace the arrow going from $(x,t)$ to $(x+1,t+1)$ by two arrows going from $(x,t)$ to $(x,t+1)$ and then from $(x,t+1)$ to $(x+1,t+1)$. The arrow map $\alpha$ describing the particle trajectories is some complicated, but fairly explicit function on $\tilde\W$, that we do not spell out.

Now that we have cast the SSEP in our setting, we prove that it has positive entropy:
\begin{thm}
    The stationary SSEP system $(\tilde{\W},\tilde{\Prob},\Z^2)$ described in \Propref{prop:construction of stationary measure on ssep using kolmogorov consistency theorem} has positive entropy.
    \label{thm:the ssep system has positive entropy}
\end{thm}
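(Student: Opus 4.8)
The plan is to exhibit positive entropy in the stationary SSEP system $(\tilde{\W},\tilde{\Prob},\Z^2)$ by finding a finite-valued factor whose entropy-rate is bounded below by a positive constant. The natural candidate factor is the one recording, at each lattice site $(x,t)$, the firing variable $\xi(x,t) \in \{0,1\}$ — or, more precisely, the ``stirring'' decision at each horizontal edge. Since the firing variables are themselves an iid $\Bernoulli(p)$ family indexed by $H(\ssepstatespace \times \Z) \cong \Z^2$, the sub-$\sigma$-algebra they generate, viewed as a $\Z^2$-shift factor, carries entropy-rate exactly $H(\Bernoulli(p)) = -p\log p - (1-p)\log(1-p) > 0$ for $p \in (0,1)$. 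First I would make precise that the map $\w \mapsto \{\xi(T^z\w)\}_{z}$ is a $\Z^2$-equivariant factor map from $\tilde{\W}$ onto the Bernoulli shift $\{0,1\}^{\Z^2}$: this is immediate from the construction of $\tilde\Prob$ in \Propref{prop:construction of stationary measure on ssep using kolmogorov consistency theorem}, since by Kolmogorov consistency the firing variables retain their iid law under $\tilde\Prob$, and the shifts act on the edge-indexed family by translation. Then, because entropy-rate is monotone under factors, $h(\tilde\Prob) \geq h(\Prob_{\Bernoulli(p)}) = H(\Bernoulli(p)) > 0$, which is the claim.

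The one subtlety is bookkeeping about what exactly the ``arrow'' factor $(\directions^{\Z^2},\Prob,\Z^2)$ referenced in \Thmref{thm:positive entropy trajectories} is, versus the full extended system $\tilde{\W}$: the theorem here is stated for $\tilde{\W}$, and the firing-variable factor lives directly inside $\tilde{\W}$, so no further descent is needed for this particular statement. I would spell out that $\tilde\W = (\{0,1\}\times\{0,1\}\times[0,1])^{\ssepstatespace\times\Z}$ carries three coordinate projections, and the middle one, composed with the identification of sites with edges, yields the required iid factor; the tie-breaking $U$-variables and the configuration coordinate $\eta$ are simply ignored. The measure-theoretic point that a coordinate projection of a product-type measure is a factor in the dynamical sense is routine given the $\Z^2$-equivariance already recorded in \Propref{prop:construction of stationary measure on ssep using kolmogorov consistency theorem}.

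The main obstacle, such as it is, is not in the entropy estimate but in being careful that the firing variables under $\tilde\Prob$ really are a full iid family on all of $\Z^2$ (all times, including times before and after any particular configuration was ``seeded''), rather than only on a half-plane. This is handled by the Kolmogorov-consistency construction: $\tilde\Prob$ is the projective limit of the $\tilde\Prob_n^\nu$, and in each of these the firing variables in the relevant half-plane are iid $\Bernoulli(p)$; the limit therefore has iid firing variables on the entire plane $\ssepstatespace\times\Z$, and these are independent of the tie-breaking variables by construction. Given this, the rest is the standard fact that the Bernoulli $(p,1-p)$ shift on $\Z^2$ has entropy-rate $H(\Bernoulli(p)) = -p\log p-(1-p)\log(1-p)$ (see, e.g., \citep[Theorem 15.12]{MR2807681}), together with monotonicity of entropy under factor maps, \eqref{eq:entropy definition in terms of partitions}. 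I would close by noting that this also shows the \biinfinite{} trajectory system of \Thmref{thm:positive entropy trajectories} has positive entropy once one checks — via the explicit construction of the arrow map and the fact, to be verified separately, that the particle trajectories in this SSEP are almost surely bi-infinite and that a positive density of the firing randomness is ``read off'' along a trajectory — but the bare statement of \Thmref{thm:the ssep system has positive entropy} requires only the factor argument above.
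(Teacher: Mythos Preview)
Your argument is correct for the stated theorem, but it takes a genuinely different route from the paper. The paper also proves \Thmref{thm:the ssep system has positive entropy} via a factor argument, but it projects onto the \emph{first} coordinate $\eta_t(x)$ (the particle configuration) rather than the second coordinate $\xi$ (the firing variables) as you do. Showing that the particle-configuration factor $(\Omega,\Prob,\Z^2)$ has positive entropy is substantially harder: the paper goes through a conditional-entropy computation (Lemmas \ref{lem:bernoulli}--\ref{lem:conditional entropy of ssep is large}) that identifies ``free'' particles---isolated particles with at least three empty sites on each side---and bounds from below the entropy created by their stirring options.

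What each approach buys: your route is much shorter and gives the exact entropy of the $\xi$-factor, namely $H(\Bernoulli(p))$, for free. The paper's route, however, is not gratuitous: the subsequent proof of \Thmref{thm:positive entropy trajectories} argues by contradiction that if the \biinfinite{} trajectory system had zero entropy, then the number of particle configurations in an $m\times m$ box would grow subexponentially in $m^2$, contradicting the positive entropy of the \emph{particle-configuration} factor specifically. The firing variables carry randomness that is not visible in the arrow system $(\directions^{\Z^2},\Prob)$, so knowing only that the $\xi$-factor has positive entropy would not suffice for that later contradiction. You anticipated this in your closing remark, and you are right that your simpler argument fully handles \Thmref{thm:the ssep system has positive entropy} as stated; just be aware that \Propref{prop:pos entropy} (positive entropy of the $\eta$-factor) is the statement the paper actually needs downstream.
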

\color{black}
\begin{proof}%
    Let ${\Omega}=\{0,1\}^{\mathbb{Z}\times \mathbb{Z}}$. Let $\pi:\tilde{\Omega} \to {\Omega}$ be projection onto the first coordinate, and let ${\mathbb{P}}=\pi_* \tilde{\mathbb{P}}$ be the pushforward measure. It is enough to show that $({\Omega},{\mathbb{P}},\mathbb{Z}^2)$ has positive entropy to prove \Thmref{thm:the ssep system has positive entropy} since entropy can only decrease under the factor map $\pi: \tilde\Omega \to \Omega$. Thus, \Thmref{thm:the ssep system has positive entropy} follows from \Propref{prop:pos entropy} below.
\end{proof}
\begin{prop}\label{prop:pos entropy} $({\Omega},{\mathbb{P}},\mathbb{Z}^2)$ has positive entropy.
\end{prop}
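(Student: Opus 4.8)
The plan is to bound the $\Z^2$ entropy rate $h(\mathbb{P})$ below by a constant depending only on the firing parameter $p$, working directly from the definition of $h(\mathbb{P})$ as the limit of normalized box entropies. Write $\w\in\Omega=\{0,1\}^{\Z\times\Z}$ as $\w(x,t)=\eta_t(x)$, with time running vertically. Two structural facts drive everything: every horizontal row of $\mathbb{P}=\pi_*\tilde{\mathbb{P}}$ has marginal $\Bernoulli(p)^{\otimes\Z}$, because $\nu=\Bernoulli(p)$ is invariant for the SSEP by \Thmref{thm:invariant measures for discrete simple exclusion process}; and the row sequence $(\eta_t)_t$ is a stationary Markov chain whose one-step transition is a single application of the stirring dynamics, driven by firing/tie-breaking variables independent of the present and past rows.

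Step 1: reduce to a single time step. For a box $\rect_0(N,L)=[-N,N]\times[-L,L]$, time-stationarity and the chain rule in the time coordinate give
\[
  H\!\left(\mathbb{P}\circ\pi_{\rect_0(N,L)}^{-1}\right)=\sum_{j=0}^{2L}H\!\left(\eta_j|_{[-N,N]}\,\Big|\,\eta_0|_{[-N,N]},\dots,\eta_{j-1}|_{[-N,N]}\right).
\]
Conditioning each $j\ge1$ summand additionally on the whole previous row $\eta_{j-1}$ only decreases it, and then the Markov property erases rows $0,\dots,j-2$, so each such summand equals at least $H(\eta_1|_{[-N,N]}\mid\eta_0)$ by time-stationarity (the $j=0$ term is $\ge 0$). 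Hence $H(\mathbb{P}\circ\pi_{\rect_0(N,L)}^{-1})\ge 2L\,H(\eta_1|_{[-N,N]}\mid\eta_0)$, so dividing by $|\rect_0(N,L)|=(2N+1)(2L+1)$ and letting $N,L\to\infty$ (the shape of the exhausting boxes being irrelevant, as recalled after the definition of the entropy rate) yields
\[
  h(\mathbb{P})\ \ge\ \liminf_{N\to\infty}\frac{1}{2N+1}\,H\!\left(\eta_1|_{[-N,N]}\mid\eta_0\right).
\]
It therefore suffices to show that $H(\eta_1|_{[-N,N]}\mid\eta_0)$ grows at least linearly in $N$.

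Step 2 (the main obstacle): a single SSEP step already injects order-$N$ bits, in spite of the fact that the tie-breaking rule is not local (a maximal run of firing edges can be long). Call $x$ a buffered isolated particle (b.i.p.)\ of $\zeta\in\{0,1\}^\Z$ if $\zeta(x)=1$ and $\zeta\equiv0$ on $\{x-2,x-1,x+1,x+2\}$, and let $Y_x\in\{L,0,R\}$ record whether the particle at $x$ moves left, stays, or moves right in the next step. Fix candidate positions $x_j=-N+2+5j$ for $0\le j\le\lfloor(2N-4)/5\rfloor$: there are $\ge 2N/5-1$ of them, pairwise $\ge 5$ apart, all inside $[-N+2,N-2]$. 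Condition on $\eta_0=\zeta$ and set $J(\zeta)=\{j:x_j\text{ is a b.i.p.\ of }\zeta\}$, $m(\zeta)=|J(\zeta)|$. If $j\in J(\zeta)$, conservation of particles together with the buffer forces exactly one of $\eta_1(x_j-1),\eta_1(x_j),\eta_1(x_j+1)$ to equal $1$, and $Y_{x_j}$ is the corresponding element of $\{L,0,R\}$; in particular $Y_{x_j}$ is then a function of $\eta_1|_{[-N,N]}$, so $H(\eta_1|_{[-N,N]}\mid\eta_0=\zeta)\ge H\bigl((Y_{x_j})_{j\in J(\zeta)}\mid\eta_0=\zeta\bigr)$. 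To bound this below by $c_1\,m(\zeta)$, let $A_j$ be the event that the edges $\{x_j-2,x_j-1\}$ and $\{x_j+1,x_j+2\}$ do not fire; on $A_j$ the maximal firing cluster of each edge adjacent to $x_j$ lies inside those two edges, so $Y_{x_j}$ becomes an explicit function of the four stirring variables on the edges with left endpoint in $\{x_j-2,x_j-1,x_j,x_j+1\}$. Put $\widetilde Y_{x_j}=Y_{x_j}$ on $A_j$ and $\widetilde Y_{x_j}=\star$ off $A_j$: then $\widetilde Y_{x_j}$ depends only on those four edges' variables, and since the candidates are $\ge5$ apart the corresponding edge-sets are disjoint, so the $\widetilde Y_{x_j}$ are i.i.d. A short computation with $\Bernoulli(p)$ firing and $\Uniform[0,1]$ tie-breakers gives $\mathbb{P}(Y_{x_j}=0\mid A_j)=(1-p)^2$ and $\mathbb{P}(Y_{x_j}=L\mid A_j)=\mathbb{P}(Y_{x_j}=R\mid A_j)=p(2-p)/2$, all strictly positive, whence $H(\widetilde Y_{x_j})-H(\mathbf{1}_{A_j})=(1-p)^2c_p$ with $c_p:=H\!\bigl((1-p)^2,\tfrac{p(2-p)}2,\tfrac{p(2-p)}2\bigr)>0$. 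Since $\mathbf{1}_{A_j}$ is a function of $\widetilde Y_{x_j}$ while $(Y_{x_j},\mathbf{1}_{A_j})$ determines $\widetilde Y_{x_j}$, subadditivity of entropy gives, for any finite index set $J$,
\[
  \sum_{j\in J}H(\widetilde Y_{x_j})=H\!\bigl((\widetilde Y_{x_j})_{j\in J}\bigr)=H\!\bigl((Y_{x_j},\mathbf{1}_{A_j})_{j\in J}\bigr)\le H\!\bigl((Y_{x_j})_{j\in J}\bigr)+\sum_{j\in J}H(\mathbf{1}_{A_j}),
\]
so $H\bigl((Y_{x_j})_{j\in J}\bigr)\ge|J|\,(1-p)^2c_p$; taking $J=J(\zeta)$ gives $\ge m(\zeta)(1-p)^2c_p$. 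Averaging over $\zeta\sim\Bernoulli(p)^{\otimes\Z}$ and using $\mathbb{E}_\zeta\,m(\zeta)=(\#\{\text{candidates}\})\,p(1-p)^4\ge(2N/5-1)\,p(1-p)^4$, we get $H(\eta_1|_{[-N,N]}\mid\eta_0)\ge(1-p)^2c_p\,p(1-p)^4\,(2N/5-1)$, which with Step 1 yields $h(\mathbb{P})\ge\tfrac15\,p(1-p)^6c_p>0$.

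The genuinely delicate point is exactly this passage in Step 2 from the clean, exactly-independent variables $\widetilde Y_{x_j}$ back to the true displacements $Y_{x_j}$: truncating to $A_j$ is what makes the local pieces independent, and the inequality $H\bigl((Y_{x_j})_{j\in J}\bigr)\ge\sum_{j\in J}\bigl(H(\widetilde Y_{x_j})-H(\mathbf{1}_{A_j})\bigr)$ together with $H(\widetilde Y_{x_j})>H(\mathbf{1}_{A_j})$ is what turns that independence into an entropy lower bound linear in $N$. Everything else — the chain-rule and Markov manipulation of Step 1, and the invariance of the $\Z^2$ entropy rate under the shape of the exhausting boxes — is routine.
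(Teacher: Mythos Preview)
Your proof is correct and follows the same strategy as the paper: reduce the $\Z^2$ entropy rate to a one-step conditional row entropy via the chain rule and the Markov property (the paper's Lemma~\ref{lem:it suffices to show conditional entropy of ssep is pretty large}, though you condition on the full row $\eta_0$ rather than a finite box), then harvest order-$N$ entropy from the random displacements of isolated particles (the paper's Lemma~\ref{lem:conditional entropy of ssep is large}, whose ``free'' sites carry a 3-site buffer in place of your 2-site buffer with spacing-5 candidates, and whose independence argument via the bounds on $p_k$ plays the role of your $\widetilde Y$ trick). One small slip: your displayed equality $H((\widetilde Y_{x_j})_j)=H((Y_{x_j},\mathbf{1}_{A_j})_j)$ should be $\le$, since $\widetilde Y_{x_j}$ does not determine $Y_{x_j}$ on $A_j^c$; the inequality points the right way and the argument is unaffected.
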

Lemmas \ref{lem:bernoulli} and \ref{lem:it suffices to show conditional entropy of ssep is pretty large} prove that $({\Omega},{\mathbb{P}},\mathbb{Z}^2)$ has positive entropy. 
\begin{lem}\label{lem:bernoulli} Restricted to each horizontal line $\mathbb{P}$ is a Bernoulli measure. 
\end{lem}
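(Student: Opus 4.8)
The plan is to reduce \Lemref{lem:bernoulli} to the invariance of Bernoulli measures under the SSEP, which is \Thmref{thm:invariant measures for discrete simple exclusion process}. Recall from \Propref{prop:construction of stationary measure on ssep using kolmogorov consistency theorem} that $\tilde{\Prob}$ is the projective limit of the measures $\tilde{\Prob}_n^{\nu}$ on $\tilde{\Omega}_n=\left(\{0,1\}\times\{0,1\}\times[0,1]\right)^{I_n\times\Z}$, where $\tilde{\Prob}_n^\nu$ is the law of the stirring/SSEP dynamics on the time-window $I_n=\{-n,-n+1,\dots\}$ started at level $t=-n$ from the product (Bernoulli) measure $\nu$, with iid firing and tie-breaking variables on the remaining edges. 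In particular, for each fixed $n$ the marginal of $\tilde{\Prob}$ on the coordinates indexed by $I_n\times\Z$ equals $\tilde{\Prob}_n^\nu$.

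First I would fix a time $t\in\Z$ and pick $n\ge -t$, so that $t\in I_n$. Under $\tilde{\Prob}_n^\nu$, the configuration $(\eta_t(x))_{x\in\Z}$ is obtained from $(\eta_{-n}(x))_{x\in\Z}\sim\nu$ by running $n+t$ steps of the SSEP. Since the firing parameter satisfies $p<1/2$ and $\nu$ is an iid Bernoulli measure, \Thmref{thm:invariant measures for discrete simple exclusion process} tells us $\nu$ is invariant, so by induction the law of $(\eta_t(x))_{x\in\Z}$ under $\tilde{\Prob}_n^\nu$ is again $\nu$; by the consistency of the projective-limit construction, the same is true under $\tilde{\Prob}$. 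The map $\pi\colon\tilde{\Omega}\to\Omega=\{0,1\}^{\Z\times\Z}$ only retains the $\eta$-coordinate, so the marginal of $\Prob=\pi_*\tilde{\Prob}$ on the horizontal line $\Z\times\{t\}$ is exactly $\nu$, an iid Bernoulli measure. Since $t$ was arbitrary this proves \Lemref{lem:bernoulli}; consistently with the $\Z^2$-invariance of $\tilde{\Prob}$ from \Propref{prop:construction of stationary measure on ssep using kolmogorov consistency theorem}, all horizontal lines carry the same Bernoulli measure $\nu$.

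I do not expect any real obstacle here: the only point that needs care is the bookkeeping of the projective-limit construction, namely choosing $n$ large enough that the line $\Z\times\{t\}$ lies inside the time-window $I_n$, after which \Thmref{thm:invariant measures for discrete simple exclusion process} does all the work. One should also keep in mind that ``Bernoulli measure'' in the statement is to be read as an iid product measure on $\{0,1\}^{\Z}$, which is precisely what $\nu$ is.
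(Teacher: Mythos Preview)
Your proposal is correct and follows essentially the same route as the paper: both use Kolmogorov consistency to identify the horizontal marginal of $\Prob$ with the corresponding marginal of $\tilde\Prob_n^\nu$, and both ultimately rely on the invariance of the Bernoulli measure $\nu$ under the SSEP. The only cosmetic difference is packaging: the paper picks the line $t=-n$ (where the marginal is $\nu$ by construction as the initial condition) and then transfers to all other lines via the vertical shift invariance already recorded in \Propref{prop:construction of stationary measure on ssep using kolmogorov consistency theorem}, whereas you fix an arbitrary $t$ and invoke \Thmref{thm:invariant measures for discrete simple exclusion process} directly to propagate $\nu$ forward $n+t$ steps.
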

\begin{proof}
    Fix some horizontal line $t = -n$. $\Prob_n^\nu$ is a Bernoulli measure $\nu$ on this horizontal line, and hence so is $\Prob$ by consistency and the vertical shift invariance of  $\Prob$.
\end{proof}

Let $\mathcal{B}_{m,n}=\{0,1\}^{m \times n}$ be the set of particle configurations in a $m \times n$ rectangular subset of $\Z^2$, and let $\pi_{m,n}$ be the projection from $\tilde{\W}$ to $\mathcal{B}_{m,n}$. For $b \in\mathcal{B}_{m,n}$, we will use the shorthand $\Prob(b)$ for $\Prob \circ \pi_{m,n}^{-1}(b)$ in the following. 

The conditional entropy is defined as usual for $i = 1,2,\ldots$ by
\[
    H(\mathcal{B}_{m,i+1} |\mathcal{B}_{m,i}) = \sum_{a \in \mathcal{B}_{m,i}, b \in \mathcal{B}_{m,i+1} }  \Prob(b|a) \log \Prob(b | a) \Prob(a) .
\] 
Lemma \ref{lem:it suffices to show conditional entropy of ssep is pretty large} shows that it is enough to get a lower bound on the conditional entropy of the SSEP to prove \Propref{prop:pos entropy}.  
\begin{lem}
\label{lem:it suffices to show conditional entropy of ssep is pretty large}
    To prove \Propref{prop:pos entropy}, it suffices to show that there exists $c>0$ so that for all large enough $n,m$ we have
    \[
        H(\mathcal{B}_{m,n+1} |\mathcal{B}_{m,n}) \geq cm.
    \]
\end{lem}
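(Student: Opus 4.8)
The plan is to express the two-dimensional entropy-rate of $(\Omega,\Prob,\Z^2)$ in terms of the vertical (time) direction and use the fact that, conditionally on one horizontal line, the SSEP is a Markov chain in time. First I would recall that, by the general theory of entropy rates for $\Z^2$-actions (cf.\ \citep[Theorem 15.12]{MR2807681}), the entropy rate can be computed by exhausting $\Z^2$ with $m\times n$ rectangles and letting $m,n\to\infty$ in any order, so
\[
    h(\Prob) = \lim_{m\to\infty}\lim_{n\to\infty} \frac{1}{mn} H(\mathcal{B}_{m,n}).
\]
Next I would telescope $H(\mathcal{B}_{m,n})$ along the time direction: since $\mathcal{B}_{m,n}$ is the join of the $n$ single-row partitions $\mathcal{B}_{m,i+1}$ given $\mathcal{B}_{m,i}$, the chain rule for entropy gives
\[
    H(\mathcal{B}_{m,n}) = H(\mathcal{B}_{m,1}) + \sum_{i=1}^{n-1} H(\mathcal{B}_{m,i+1}\mid \mathcal{B}_{m,i}) \, .
\]
Here I want to use that the SSEP is Markov in time so that conditioning on the full past reduces to conditioning on the previous row; more precisely, $H(\mathcal{B}_{m,i+1}\mid \mathcal{B}_{m,i})$ should be (up to boundary corrections that are $o(n)$ as the horizontal window $m$ grows) a lower bound for the per-step entropy increment of the stationary chain. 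Combining this with the hypothesized bound $H(\mathcal{B}_{m,i+1}\mid\mathcal{B}_{m,i})\ge cm$ for all large $m,i$ yields $H(\mathcal{B}_{m,n})\ge cm(n-1) + O(m)$, hence $\frac{1}{mn}H(\mathcal{B}_{m,n}) \ge c(1 - 1/n) + O(1/n)$, and letting $n\to\infty$ and then $m\to\infty$ gives $h(\Prob)\ge c > 0$, which is \Propref{prop:pos entropy}.

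The one subtlety to address is that $\mathcal{B}_{m,i+1}$ as used in the conditional-entropy bound is a single new horizontal row stacked on a finite stack of previous rows, while in the telescoping identity the $i$-th increment conditions on all $i$ previous rows inside the $m$-window; since conditioning on more decreases entropy, $H(\mathcal{B}_{m,i+1}\mid \mathcal{B}_{m,1},\dots,\mathcal{B}_{m,i}) \le H(\mathcal{B}_{m,i+1}\mid \mathcal{B}_{m,i})$, so a priori the hypothesis gives an upper bound for the telescope terms, not a lower one. I would fix this by instead invoking stationarity of $\Prob$ under the vertical shift (Lemma \ref{lem:bernoulli}'s setup): the sequence $H(\mathcal{B}_{m,i+1}\mid \mathcal{B}_{m,1},\dots,\mathcal{B}_{m,i})$ is nonincreasing in $i$ and converges to the conditional entropy given the entire past, which for a stationary process is exactly the per-row entropy rate in the vertical direction; and because the SSEP row-configuration process, conditioned on the $m$-window, has finite-range dependence in time, the difference between conditioning on one previous row and on all previous rows is controlled. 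Alternatively — and this is cleaner — one applies the hypothesis with the roles reversed: the bound $H(\mathcal{B}_{m,n+1}\mid \mathcal{B}_{m,n})\ge cm$ is really a statement about the two-row marginal, and by monotonicity of entropy under refining the alphabet one gets $\frac{1}{m(n+1)}H(\mathcal{B}_{m,n+1}) \ge \frac{1}{m(n+1)}\big(H(\mathcal{B}_{m,n+1}\mid \mathcal{B}_{m,n}) + \dots\big)$, iterating this from the bottom row up so that every conditional term is genuinely of the two-consecutive-rows form.

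The main obstacle I expect is precisely this bookkeeping: making rigorous that the hypothesized conditional-entropy lower bound $H(\mathcal{B}_{m,n+1}\mid\mathcal{B}_{m,n})\ge cm$ — which conditions only on the single adjacent row — can be inserted into the telescoping sum for $H(\mathcal{B}_{m,N})$ without losing a factor, given that the correct telescoping conditions on all rows below. I believe this is handled either by (i) the Markov-in-time property of the SSEP, which makes conditioning on the adjacent row equivalent to conditioning on all rows below once the $m$-window is wide enough to see the relevant history (with an $O(\text{boundary})$ error), or (ii) simply re-reading the hypothesis as a statement about any two consecutive rows within a rectangle and summing those increments from the bottom up; the inequality $H(\text{top row}\mid \text{row below})\ge cm$ holds for each such pair by vertical stationarity, so $H(\mathcal{B}_{m,N})\ge \sum_{i=1}^{N-1} H(\mathcal{B}_{m,i+1}\mid \mathcal{B}_{m,i}) \ge c m(N-1)$, and dividing by $mN$ and sending $N\to\infty$ finishes the argument. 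Either way, the real content — the quantitative lower bound $cm$ on the entropy gained in one time step — is deferred to the subsequent lemmas, and Lemma \ref{lem:it suffices to show conditional entropy of ssep is pretty large} is just this reduction.
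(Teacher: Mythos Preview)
Your initial approach—the chain-rule telescope followed by inserting the hypothesis—is correct and is exactly what the paper does. The ``subtlety'' you spend two paragraphs worrying about is a phantom arising from a misreading of the notation. In the paper, $\mathcal{B}_{m,n} = \{0,1\}^{m\times n}$ is the partition by the \emph{entire} $m\times n$ rectangle, not by a single row. Hence $H(\mathcal{B}_{m,n+1}\mid\mathcal{B}_{m,n})$ in the statement of the lemma already means the entropy of row $n+1$ conditioned on \emph{all} $n$ rows below; it is literally the same object as the $i$-th increment in your telescope. There is no discrepancy between ``conditioning on the adjacent row'' and ``conditioning on all previous rows'' to reconcile, because the hypothesis already conditions on the full rectangle below. (In particular, your displayed inequality $H(\mathcal{B}_{m,i+1}\mid \mathcal{B}_{m,1},\dots,\mathcal{B}_{m,i}) \le H(\mathcal{B}_{m,i+1}\mid \mathcal{B}_{m,i})$ is actually an equality, since $\mathcal{B}_{m,i}$ refines each of $\mathcal{B}_{m,1},\dots,\mathcal{B}_{m,i-1}$.)

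With that clarified, the proof collapses to your first three sentences: telescope, insert the hypothesis $H(\mathcal{B}_{m,i+1}\mid\mathcal{B}_{m,i})\geq cm$ for large $i$ and $m$, divide by $mn$, take limits. You never need the Markov property, finite-range dependence, or any of the alternative fixes you propose. The paper records the conclusion in the equivalent form $h(\Prob) = \lim_{m,n\to\infty} \tfrac{1}{m}H(\mathcal{B}_{m,n+1}\mid\mathcal{B}_{m,n}) \geq c$, using that the conditional entropies are nonincreasing in $n$ so the Ces\`aro average equals the limit.
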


\begin{proof}
    A standard calculation shows that the entropy can be decomposed as a sum of conditional entropies as follows \citep[Ch.\,9.2, Prop 2.1]{MR797411}:
    \[
         H(\mathcal{B}_{m,n})
        = H(\mathcal{B}_{m,1}) + \sum_{i=1}^{n-1} H(\mathcal{B}_{m,i+1} |\mathcal{B}_{m,i}).
    \]
    Therefore, the entropy rate is
    \begin{equation}
    h(\Prob) 
    = {\lim_{m,n \to \infty}} \, \frac 1 m H(\mathcal{B}_{m,n+1}|\mathcal{B}_{m,n}) \geq c.
    \label{eq:entropy rate is positive if conditional entropies are positive}
    \end{equation}
\end{proof}

\begin{lem}
   There exists $c>0$ so that  for all $n > 0$ and $m \geq 7$,
    \[
        H(\mathcal{B}_{m,n+1} |\mathcal{B}_{m,n}) > cm.
    \]
    \label{lem:conditional entropy of ssep is large}
\end{lem}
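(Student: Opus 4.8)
The plan is to get a lower bound on the one-step conditional entropy $H(\mathcal{B}_{m,n+1}\mid\mathcal{B}_{m,n})$ by exhibiting, conditionally on the configuration $\eta_n$ in the $m$-wide window at time $n$, a large amount of genuine randomness produced by the stirring dynamics in one time step. By Lemma~\ref{lem:it suffices to show conditional entropy of ssep is pretty large} this is enough. The key mechanism is that a single firing variable on an edge $e=((x,n),(x+1,n))$ that joins an occupied site to an empty site (a ``movable'' edge) creates, if it stirs, a change in $\eta_{n+1}$ that is not forced by $\eta_n$, and these events are driven by independent Bernoulli variables $\xi(e)$ that are independent of everything at time $\le n$.

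First I would fix the time-$n$ configuration $\eta_n=a$ restricted to a slightly larger window (say $m+2$ wide, to handle boundary effects coming from edges straddling the window) and lower bound the conditional entropy $H(\eta_{n+1}\mid \eta_n=a)$ for a positive-density set of $a$'s. The idea: since on each horizontal line $\Prob$ is the product Bernoulli($p_0$) measure with $p_0\in(0,1)$ fixed (Lemma~\ref{lem:bernoulli}), with probability bounded below the window of width $m$ contains at least $cm$ disjoint ``$10$''-patterns (an occupied site immediately left of an empty site) that are also separated from each other by enough empty/occupied buffer so that the stirring moves across them in one step do not interact. Condition on the locations of such a family of $\lfloor c'm\rfloor$ pairwise non-interacting movable edges $e_1,\dots,e_{\lfloor c'm\rfloor}$. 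For each $e_j$, whether the particle actually hops across $e_j$ at the next step depends on the firing variable $\xi(e_j)$ and the tie-breaking variables on the maximal run of simultaneously-firing edges containing $e_j$; because of the separation we arranged, this is a nondegenerate Bernoulli-type event with success probability bounded away from $0$ and $1$ by a constant $q\in(0,1)$ depending only on $p$, and — crucially — these events are mutually independent across $j$ and independent of $\eta_n$. Distinct outcomes of the indicator vector $(\text{hop across }e_j)_{j}$ produce distinct $\eta_{n+1}$, so $H(\eta_{n+1}\mid\eta_n=a)\ge \lfloor c'm\rfloor\, H(q)=cm$ for such $a$. Averaging over the positive-density set of good $a$'s (and absorbing the boundary terms, which only cost $O(1)$ in $m$) yields $H(\mathcal{B}_{m,n+1}\mid\mathcal{B}_{m,n})\ge cm$ for all $n$ and all $m\ge 7$, with $c$ independent of $m,n$. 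Note this bound is uniform in $n$ precisely because the Bernoulli marginal on each horizontal line does not change in $n$, so the ``density of movable edges'' estimate is the same at every time.

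The main obstacle I anticipate is the \emph{interaction and tie-breaking bookkeeping}: I must choose the family $e_1,\dots,e_{\lfloor c'm\rfloor}$ so that the one-step stirring move across $e_j$ is a clean Bernoulli event whose outcome is a measurable function of the stirring variables on a bounded neighborhood of $e_j$, with these neighborhoods disjoint, so that independence and the ``distinct outcomes $\Rightarrow$ distinct $\eta_{n+1}$'' implications hold without any chance of a hop across $e_{j'}$ filling the hole that a hop across $e_j$ created. Spacing the chosen occupied/empty pairs by a constant number of sites (this is where $m\ge 7$ and the constant $c'$ come from) and invoking the maximal-run tie-breaking rule only within each isolated block takes care of this, but it requires care to state precisely. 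A secondary, minor obstacle is making the ``positive density of non-interacting $10$-patterns with buffers'' estimate rigorous: this is a routine large-deviations / ergodic-theorem statement for the i.i.d. Bernoulli measure on a line, giving that with probability $\to 1$ the number of such disjoint buffered patterns in a window of width $m$ is at least $c'm$. Once these combinatorial points are pinned down, the entropy lower bound is immediate from subadditivity of Shannon entropy (conditioning reduces entropy, and entropy of a vector is at least that of a sub-vector of independent coordinates).
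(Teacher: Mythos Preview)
Your proposal is correct and follows essentially the same approach as the paper. The paper's version is marginally cleaner in two respects: (i) instead of buffered ``$10$'' patterns it uses \emph{free} particles --- sites $x$ with $\eta_n(x)=1$ and $\eta_n(x\pm i)=0$ for $i=1,2,3$ --- which makes the disjointness of the relevant edge-neighborhoods automatic; and (ii) rather than conditioning on a positive-density event of ``good'' $a$'s, it proves the pointwise bound $H(\eta_{n+1}\mid\eta_n=a)\ge c\cdot r(a)$ with $r(a)$ the number of free sites, and then simply takes expectation, using that $\E[r(a)]=q(m-6)$ for the explicit Bernoulli probability $q$ of the pattern $0001000$. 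Your anticipated obstacle about the tie-breaking run possibly extending through the buffer is real, and the paper handles it exactly as you would: it lower-bounds the stirring probability by the probability of a \emph{local} sufficient event (one adjacent edge fires and its two neighbors do not, so the fired edge is a maximal run of length $1$ and wins trivially), which involves only boundedly many edges and hence gives the needed independence across free particles.
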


\begin{proof} 
    Given $a \in \mathcal{B}_{m,n}$, we say a spot on the top row $(x,n)$ is \emph{free} if $a_{x,n}=1$ and $a_{x\pm i,n} = 0$ for $i=1,2,3$.  
    We first show that there exists $q>0$ so that if $a\in \mathcal{B}_{m,n}$ and the top row has $r$ free entries then 
    \begin{equation}
        -\frac 1 {\Prob(a)}\sum_{b' \in \mathcal{B}_{m,n+1}}\Prob(b'\cap a)\log\left(\frac{\Prob(b'\cap a)}{\Prob(a)} \right)\geq rq.
        \label{eq:conditional entropy in ssep is large}
    \end{equation} 

     We now consider a partition $\mathcal{P}'$ of $\mathcal{B}_{m,n+1}$ by refining $\mathcal{B}_{m,n}$ according to the previously described outcomes at the free entries. Let $a \in \mathcal{B}_{m,n}$, and suppose it has $r \geq 0$ free sites. There are $3^r$ outcomes possible at the free sites {in the following sense: each particle can stay put, move left or move right}. For each of these outcomes on the free sites, define a set $B_{\beta,a}$ ($\beta=1,\ldots,3^r$) that consists of the points in $\mathcal{B}_{m,n+1}$ that match $a$ on the first $n$ rows, and the outcome on the $n+1$\textsuperscript{th} row at the free sites. Then $\mathcal{P}'$ is the partition formed by taking a union over these sets and the different points $a \in \mathcal{B}_{m,n}$.
     
     Let $b \in \mathcal{B}_{m,n+1}$. Let $p_k$ be the conditional probability given $a$ that exactly $k$ specified particles stir in the $r$ free spots and the other particles in the free spots stay put. To get a particle to stir, it is sufficient if exactly one of the adjacent edges fires; to force it to not stir, it is sufficient if both its adjacent edges do not fire. Since these $r$ particles are free, the edges involved in these events for different particles are disjoint ---particles are separated by at least three empty spots--- and consequently the associated firing variables are independent. Hence,
     \begin{equation}
         p_k \geq (2p(1-p)^2)^k ((1-p)^2)^{r-k}.
         \label{eq:lower bound for free particle stirring}
     \end{equation}
     {To explain this bound, suppose we have the following particle configuration
         \begin{equation} 
         \circ \overset{a}- \circ \overset{b}- \bullet \overset{c}- \circ \overset{d}- \circ - \circ - \bullet -
         \label{eq:picture for particle movement in ssep}
     \end{equation}
         where filled circles represent particles, the open circles represent empty sites, and the dashes represent edges. If the first particle moves, then one way this can happen is if the edge labeled $b$ fires, and $a$ and $c$ do not fire; a second way is if $b$ and $d$ do not fire, but $c$ fires. This event has probability $2p(1-p)^2$ and appears in the first term of the RHS of \eqref{eq:lower bound for free particle stirring}. If the first particle does not move, then one way this could happen is if both edges $b$ and $c$ do not fire; the probability of this event is $(1-p)^2$ and this appears in the second term of \eqref{eq:lower bound for free particle stirring}. Note that by the definition of free, the edges in these events for different free particles are disjoint. 
     }
     
     To upper bound $p_k$, note that the probability that $k$ specified free particles stir is bounded above by the probability that at least one of the adjacent edges of each these $k$ particles \emph{must} fire: 
     \begin{equation}
         p_k \leq (1 - (1-p)^2)^k = (p(2-p))^k.
         \label{eq:upper bound for free particle stirring}
     \end{equation}
     Then, using \eqref{eq:upper bound for free particle stirring}, \eqref{eq:lower bound for free particle stirring} and the expectation of the Binomial distribution, we get
\begin{align}\label{eq:free est} 
    -\frac 1 {\Prob(a)}\sum_{B' \in \mathcal{P}'} & \Prob(a\cap B')\log
    \left( \frac{\Prob(B'\cap a)}{\Prob(a)} \right) \nonumber \\
    & = - \sum_{k=0}^r \binom{r}{k} p_k \log\left( p_k \right), \nonumber \\
    & \geq r c,
\end{align} 
for some $p$ dependent constant $c$. Notice that because $\mathcal{P}'$ is a coarser partition than $\mathcal{B}_{n,m+1}$ and entropy can only increase on refinement, 
this is a lower bound:
$$-\frac 1 {\Prob(a)}\sum_{b \in \mathcal{B}_{m,n+1}}\Prob(a\cap b)\log(\Prob(b\cap a)) \geq rc. $$  
Taking expectation over $a \in \mathcal{B}_{m,n}$, we get
\[
     H(\mathcal{B}_{m,n+1} |\mathcal{B}_{m,n}) \geq c \, \E[ \# \text{ of free sites on the top row of an } n \times m \text{ box}] = c q (m-6),
\]
where $q$ is the probability that a particle is free, and the $m-6$ accounts for edge effects. 
\end{proof}

Let $(\directions^{\Z^2},\Prob,\{T^z\}_{z\in \mathbb{Z}_2})$ in the statement of \Thmref{thm:positive entropy trajectories} be the stationary SSEP system $(\W,\Prob,\Z^2)$ from \Propref{prop:pos entropy}, and let $(\specialpts,\Prob_{\alpha},T_{\alpha})$ be the $\mathbb{Z}$ system we obtain on \biinfinite{} particle trajectories in the SSEP by following the prescription in the paragraph after \eqref{eq:translation map along walks}. \Thmref{thm:positive entropy trajectories} states that $(\specialpts,\Prob_{\alpha},T_{\alpha})$ has positive entropy. 
\begin{proof}[Proof of {\Thmref{thm:positive entropy trajectories}}]%
    The proof follows from the contrapositive: suppose the entropy $(\specialpts,\Prob_{\alpha},T_{\alpha})$ is $0$. Then, for any partition $\mathcal{P}$ of $\specialpts{}$, we must have $h(\Prob_{\alpha},\mathcal{P}) = 0$. We show that this contradicts \Lemref{lem:it suffices to show conditional entropy of ssep is pretty large} and \Lemref{lem:conditional entropy of ssep is large}, and therefore the entropy must be positive. Let $\mathcal{P}$ be the partition obtained by splitting $\specialpts{}$ based on the three values of the arrows $\alpha \in \{(0,1),(1,1),(-1,1)\}$. 
    Given $\e > 0$, for large enough $m$, we must have that all but $\epsilon$ of the measure of $\Prob_\alpha$ is {supported on} $(1+\epsilon)^m$ paths of length $m$. We call these paths \emph{predictable}.%

    We now imitate the proof of Theorem \ref{thm:special trajectories have positive entropy}. We now consider $\hat{\mathcal{P}}_{n,m}$, the partition of $\Omega$ given by the $2^{nm}$ elements of $\mathcal{B}_{n,m}$.
     Any path that enters a rectangle $R$ of size $n \times m$, must be in the rectangle $R'$ of size $(n+2m) \times m$ surrounding it. Consider the bottom row $R'$; each spot on the bottom row can either contain a particle or not. Fixing the places in the bottom row that have a particle, the configuration  of $R$ is determined by the paths of the particles in the bottom row.
     We now count the number of elements of $\hat{\mathcal{P}}_{n,m}$  that have at most $L\epsilon$ proportion of the paths {on the bottom row} that are not predictable: %
     \begin{multline} 
     \sum_{ k=0 }^{n+2m}\sum_{j=0}^{\lfloor L\epsilon k -\rfloor }\binom{n+2m}{j,k-j,n+2m-k}((1+\epsilon)^m)^{k-j}(3^m)^j \\
     \leq (n+2m)2^{n+2m}(1+\epsilon)^{n+2m}( 3)^{L\epsilon (n+2m)},
     \label{eq:number of paths in good set in ssep entropy calc}
     \end{multline}
     where $k$ represents the number of places on the bottom row of the rectangle that have particles and $j$ is the number of particles that have non-predictable paths. We now want to dominate $H(\hat{\mathcal{P}}_{n,m})$. The contribution to $H(\hat{\mathcal{P}}_{n,m})$ is maximized by assuming that each of the pieces in \eqref{eq:number of paths in good set in ssep entropy calc} has equal weight, and thus they contribute 
     $$
     \log(n+2m)+(n+2m)\log(2)+(n+2m)\log(1+\epsilon)+L\epsilon(n+2m)\log( 3^m)
     $$
     to $H(\hat{\mathcal{P}}_{n,m})$.
     Using the Markov inequality, the measure of the union of elements in $\hat{\mathcal{P}}_{n,m}$ with more than an $L\epsilon$ proportion of their paths not predictable is at most $L^{-1}$. So we trivially dominate 
     \begin{multline*}
         H(\hat{\mathcal{P}}_{n,m})\leq \log(n+2m)+(n+2m)\log(2)+(n+2m)\log(1+\epsilon) \\ +L\epsilon(n+2m)\log( 3^m) +\frac 1 L (n + 2m)m\log(3). 
     \end{multline*}
     If $L=\frac 1{\sqrt{\epsilon}}$ we have 
     $$\underset{m\to \infty}{\limsup}\, \frac 1 {m^2} H(\hat{\mathcal{P}}_{m,m})\leq 3\sqrt{\epsilon}\log(3)+{3}\sqrt{\epsilon}\log(3).$$
     This contradicts the fact that $({\Omega},{\mathbb{P}},\mathbb{Z}^2)$ has positive entropy and completes the proof.
\end{proof}

\printbibliography

\end{document}